\setlist[enumerate]{leftmargin=15mm,nosep}
\newcommand*\rectangled[1]{\tikz[baseline=(char.base)]{%
\node[rectangle,fill=blue!20,draw,inner sep=2pt,opacity=0.5,text opacity=1] (char) {#1};}}
\definecolor{labelkey}{rgb}{0,0.08,0.45}
\definecolor{refkey}{rgb}{0,0.6,0.0}
\definecolor{Brown}{rgb}{0.45,0.0,0.05}
\definecolor{lime}{rgb}{0.00,0.8,0.0}
\definecolor{lblue}{rgb}{0.5,0.5,0.99}
\definecolor{OliveGreen}{rgb}{0,0.6,0}
\colorlet{hlcyan}{cyan!30}
\newcommand{\sperp}{{\scriptscriptstyle\perp}}
\def\namedlabel#1#2{\begingroup
	\def\@currentlabel{#2}%
	\label{#1}\endgroup
}
\newcommand{\seppthree}{\setlength{\itemsep}{-3pt}}
\newcommand*{\tran}{^{\mkern-1.5mu\mathsf{T}}}
\providecommand{\siff}{\Leftrightarrow}
\newcommand{\weakly}{\ensuremath{\:{\rightharpoonup}\:}}
\newcommand{\nnn}{\ensuremath{{n\in{\mathbb N}}}}
\newcommand{\menge}[2]{\big\{{#1}~\big |~{#2}\big\}}
\newcommand{\fenv}[1]%
{\ensuremath{\,\overrightarrow{\operatorname{env}}_{#1}}}
\newcommand{\benv}[1]%
{\ensuremath{\,\overleftarrow{\operatorname{env}}_{#1}}}
\newcommand{\sign}{\ensuremath{\operatorname{sign}}}
\newcommand{\scal}[2]{\left\langle{#1},{#2}  \right\rangle}
\newcommand{\RR}{\ensuremath{\mathbb R}}
\newcommand{\dom}{\ensuremath{\operatorname{dom}}}
\newcommand{\argmin}{\ensuremath{\operatorname{argmin}}}
\newcommand{\prox}{\ensuremath{\operatorname{Prox}}}
\newcommand{\ran}{\ensuremath{{\operatorname{ran}}\,}}
\newcommand{\zer}{\ensuremath{\operatorname{zer}}}
\newcommand{\Id}{\ensuremath{\operatorname{Id}}}
\newcommand{\bDelta}{{\begin{proof} \Delta}}
\newcommand{\bv}{{\begin{proof} v}}
\newcommand{\bT}{{\begin{proof} T}}
\newcommand{\bD}{{\begin{proof} D}}
\newcommand{\bg}{{\begin{proof} g}}
\newcommand{\minimize}[2]{\ensuremath{\underset{\substack{{#1}}}{\mathrm{minimize}}\;\;#2 }}
\newcommand{\gap}{\mathsf{v}}
\newcommand{\rot}{\mathcal{R}}
\crefname{equation}{}{equations}
\crefname{chapter}{Appendix}{chapters}
\crefname{item}{}{items}
\crefname{enumi}{}{}
\newtheorem{theorem}{Theorem}[section]
\newtheorem{lemma}[theorem]{Lemma}
\newtheorem{corollary}[theorem]{Corollary}
\newtheorem{proposition}[theorem]{Proposition}
\newtheorem{definition}[theorem]{Definition}
\newtheorem{algorithm}{Algorithm}
\newtheorem{example}[theorem]{Example}
\newtheorem{fact}[theorem]{Fact}
\newtheorem{remark}[theorem]{Remark}
\renewcommand{\labelenumi}{\rm (\roman{enumi})}
\providecommand{\ds}{\displaystyle}
\providecommand{\abs}[1]{\lvert#1\rvert}
\providecommand{\norm}[1]{\lVert#1\rVert}
\providecommand{\RA}{\Rightarrow}
\providecommand{\RR}{\mathbb{R}}
\providecommand{\clint}[1]{\left[#1\right]}
\providecommand{\opint}[1]{\left]#1\right[}
\providecommand{\ocint}[1]{\left]#1\right]}
\providecommand{\ran}{\operatorname{ran}}
\providecommand{\intr}{\operatorname{int}}
\providecommand{\dom}{\operatorname{dom}}
\newcommand{\fix}{\ensuremath{\operatorname{Fix}}}
\providecommand{\gra}{\operatorname{gra}}
\providecommand{\Id}{\operatorname{{ Id}}}
\providecommand{\kk}{{\begin{proof} K}}
\providecommand{\fady}{\varnothing}
\providecommand{\argmin}{\mathrm{arg}\!\min}
\providecommand{\rras}{\rightrightarrows}
\providecommand{\ball}[2]{B(#1;#2)}
\providecommand{\fix}{\operatorname{Fix}}
\providecommand{\ran}{\operatorname{ran}}
\providecommand{\Id}{\operatorname{Id}}
\providecommand{\zer}{\operatorname{zer}}
\providecommand{\fady}{\varnothing}
\newcommand{\cran}{\ensuremath{\overline{\operatorname{ran}}\,}}
\providecommand{\pr}{\operatorname{Prox}}
\providecommand{\Rr}{\operatorname{R}}
\providecommand{\RR}{\mathbb{R}}
\definecolor{myblue}{rgb}{0.9,0.9,0.98}
\newcommand*\mybluebox[1]{%
\colorbox{myblue}{\hspace{1em}#1\hspace{1em}}}
\newtcbox{\mymath}[1][]{%
nobeforeafter, math upper, tcbox raise base,
enhanced, colframe=blue!20!black,
colback=brown!10, boxrule=0.7pt,
#1}
\begin{document}

%

\author{Regina S. Burachik\thanks{Mathematics, UniSA STEM, University of South Australia, Mawson Lakes, S.A. 5095, Australia. E-mails:~regina.burachik@unisa.edu.au, bethany.caldwell@mymail.unisa.edu.au, yalcin.kaya@unisa.edu.au.},\quad Bethany I. Caldwell\footnotemark[1],\quad C. Yal{\c c}{\i}n Kaya\footnotemark[1],\\[2mm] Walaa M. Moursi\thanks{Department of Combinatorics and Optimization, 
University of Waterloo,
Waterloo, Ontario N2L~3G1, Canada.
E-mail: \texttt{walaa.moursi@uwaterloo.ca}.} ~and~
Matthew Saurette\thanks{200 University Ave W,
Waterloo, Ontario N2L~3G1, Canada.
E-mail: \texttt{matthews14@hotmail.ca}.}
}

\title{\textsf{
On the 
Douglas--Rachford
and Peaceman--Rachford algorithms
in the presence of uniform monotonicity\\ and the absence of minimizers 
}
}

\date{May 21, 2024}

\maketitle

\begin{abstract}
The Douglas--Rachford and Peaceman--Rachford 
algorithms have been 
successfully employed to solve convex optimization problems, 
or more generally find zeros of monotone inclusions.
Recently, the behaviour of these methods in the inconsistent case, i.e., 
in the absence of solutions, has triggered significant consideration.
It has been shown that under mild assumptions
the shadow sequence of the Douglas--Rachford 
algorithm converges {\em weakly}  to a generalized 
solution when the underlying operators
are subdifferentials of proper lower semicontinuous 
convex functions. However, no convergence behaviour has been
proved in the case of Peaceman--Rachford algorithm.
In this paper, we prove the \emph{strong} convergence of the shadow sequences 
associated with the Douglas--Rachford 
algorithm and Peaceman--Rachford algorithm in the possibly inconsistent case 
when one of the operators is uniformly monotone
and $3^*$ monotone but not necessarily a subdifferential.
Several examples illustrate and strengthen our conclusion.  We carry out numerical experiments using example instances of optimization problems.
\end{abstract}
{ 
\noindent
{\bfseries 2010 Mathematics Subject Classification:}
{Primary 49M27, 
65K05, 
65K10, 
90C25; 
Secondary 
47H14, 
49M29. 
}

\noindent {\bfseries Keywords:}
convex optimization problem, 
Douglas--Rachford splitting,
inconsistent constrained optimization,
least squares solution,
normal problem,
Peaceman--Rachford splitting,
projection operator,
proximal mapping.

\section{Introduction}	

Throughout, we assume that 	
\begin{empheq}[box=\mybluebox]{equation}
\text{$X$ is
    a  real Hilbert space, 
}
\end{empheq}
with inner product 
$\scal{\cdot}{\cdot}\colon X\times X\to\RR$ 
and induced norm $\|\cdot\|$. 
Let $A\colon X\rras X$. 
The \emph{graph} of $A$
is $\gra A=\menge{(x,x^*)\in X\times X}{x^*\in Ax}$.
Recall $A$ is \emph{monotone} 
if $\{(x,x^*),(y,y^*)\}\subseteq \gra A$
implies that $\scal{x-y}{x^*-y^*}\ge 0$,
and $A$ is \emph{maximally monotone}
if $A$ is monotone and $\gra A$ has no proper extension (in terms of
set inclusion) that preserves the monotonicity of $A$. 
The
\emph{resolvent} of $A$
is
$J_A=(\Id+A)^{-1} $ and the
\emph{reflected resolvent} of $A$
is
$R_A=2J_A-\Id $,
where $\Id\colon X\to X\colon x\mapsto  x$.

Throughout this paper, we assume that 
\begin{empheq}[box=\mybluebox]{equation}
\text{$A\colon X\rras X$ and $B\colon X\rras X$
    are maximally monotone. 
}
\end{empheq}
Consider the monotone inclusion problem: 
\begin{equation}
\tag{P}
\label{P}
\text{Find $x\in X$
    such that $x\in \zer(A+B) := \menge{x\in X}{0\in Ax+Bx}$.}
\end{equation}
Problem \cref{P} is of significant interest in optimization.
Indeed, thanks to Rockafellar's fundamental result
\cite[Theorem~A]{Rock1970}
we know that the \emph{subdifferential 
operator} $\partial f$ of a proper lower semicontinuous convex function
$f\colon X\to \ocint{-\infty,\infty}$
is maximally monotone.
Set $(A,B)=(\partial f, \partial g)$,
where $f$ and $g$ are 
proper lower semicontinuous convex functions
on $X$. Under appropriate constraint qualifications
problem \cref{P} amounts to finding a minimizer 
of $f+g$, equivalently; a zero of $A+B$, provided that one exists. 
For detailed discussions on problem \cref{P}
and the connection to optimization problems, 
we refer the reader to 
\cite{BC2017,Borwein50,Brezis, 	
BurIus,
Comb96,
Rock98,
Simons1,
Simons2,
Zeidler2a,Zeidler2b} and the references therein.

The Douglas--Rachford algorithm
\cite[Algorithm~2]{LM}
is
a successful optimization technique 
to find  a zero of $A+B$ (assuming that one exists)
provided that we have access to the resolvents $J_A$ and $J_B$.
Under the additional assumption that $A$ is {\em uniformly monotone}
(see \cref{def:mon}\cref{def:unimon})
the Peaceman--Rachford algorithm
\cite[Algorithm~1]{LM} can also be used to solve \cref{P}.
Let $\lambda\in \ocint{0,1}$ and set 
\begin{equation}\label{eqn:T}
T=T_\lambda=(1-\lambda) \Id+\lambda R_BR_A.
\end{equation}
Both algorithms operate by iterating the so-called splitting operator
$T_\lambda$, where\footnote{In passing, we point out that 
for $\lambda\in \opint{0,1}$ $T_\lambda $
is a \emph{relaxation} of the Douglas--Rachford operator,
and the corresponding governing and shadow sequences 
enjoy the same convergence behaviour of the Douglas--Rachford
algorithm, see \cite[page~240]{Varga} and also \cite[Theorem~26.11]{BC2017}.}
$\lambda=\tfrac{1}{2}$
in the case of classical Douglas--Rachford 
and $\lambda=1$
in the case of Peaceman--Rachford. Static\footnote{Throughout this paper, we adopt 
the terminology that \emph{static} refers to results or properties 
that involves nonsequential behaviour, whereas 
\emph{dynamic} refers to results or properties that involve
algorithmic behaviour.} connection 
to the set of zeros of $A+B$ is via the identity\footnote{Let $T:X\to X$. The set of \emph{fixed points} of $T$
is $\fix T:=\menge{x\in X}{x=Tx}$.}
(see, e.g., \cite[Proposition~26.1(iii)(a)]{BC2017}) 
\begin{equation}
\label{eq:fix:zer}
\zer(A+B)=J_{A}(\fix R_BR_A)=J_{A}(\fix T_\lambda).
\end{equation}	 
Let $x\in X$. The algorithms produce two sequences:
the governing sequence $(T^nx)_\nnn$ and the shadow sequence 
$(J_AT^nx)_\nnn$.
The dynamic behaviour of the governing sequence
is beautifully ruled and 
clearly determined 
by fundamental results from fixed point theory.
Indeed, in the case of Douglas--Rachford,
because $T$ is \emph{firmly nonexpansive} exactly one of the following
happens:
\begin{enumerate*}
\item 
$\zer(A+B)\neq \fady$; equivalently, $\fix T\neq \fady$.  In this case
$T^n  x\weakly \overline{x}\in \fix T$ (see \cite[Proposition~2]{LM}) and 	
$J_AT^n  x\weakly J_A\overline{x}\in \zer(A+B)$
(see \cite{Svaiter2012}), or	
\item 
$\zer(A+B)=\fady$; equivalently, $\fix T=\fady$. In case (ii) we have that 
$\norm{T^n  x}\to +\infty$ 
(see \cite[Corollary~2.2]{BBR78}). However, and in a pleasant surprise, 
the shadow sequence 
in this case appears to have a will of its own! 
Indeed, 
when $(A,B) =(\partial f, \partial g)$
weak convergence of the shadow sequence has 
been proved in various scenarios under the mild assumption
that the \emph{normal problem} (see \cref{def:gen:Z} below) has a solution.
The normal problem is obtained by perturbing the 
sum $A+B$ in a specific manner using the so-called
\emph{minimal displacement vector} (see \cref{eq:def:v} below).
For a comprehensive reference of these results we refer the reader to 
\cite{130}, \cite{BDM:ORL16}, \cite{101}, \cite{BM:MPA17}
and the references therein. Related results for detection of infeasibility 
appear in, e.g., \cite{BGSB}, \cite{BL21}, \cite{LRY} and \cite{RLY}.
\end{enumerate*}

Nonetheless, no convergence results 
are known for the case of two maximally monotone 
operators that are not necessarily subdifferential operators.
Furthermore, unlike Douglas--Rachford algorithm,
the behaviour of  Peaceman--Rachford algorithm
in the inconsistent case remains a terra incognita, even in optimization settings.

\emph{The goal of this paper is to provide a comprehensive exploration
of  the behaviour of the 
Douglas--Rachford and 
Peaceman--Rachford algorithms 
for two maximally monotone operators 
when one of the underlying operators
is $3^*$ monotone (see \cref{def:mon}\cref{def:strmon}) and uniformly monotone
under additional mild assumptions. 
Our analysis tackles this behavior in the
 possibly inconsistent situation, i.e., when $\zer(A+B)=\fady$,
a situation that is
 hard-to-analyze but common-to-encounter. }

Our main contributions are summarized as follows:

\begin{enumerate}
\renewcommand{\labelenumi}{(C\arabic{enumi})}
\def\cnt{\stepcounter{enumi}C\arabic{enumi}}
\item[\rectangled{\cnt}]  
In \cref{lem:pre:tele} we provide useful identities and 
consequent inequalities which serve as key ingredients in the 
convergence analysis of the algorithms.
\item[\rectangled{\cnt}] 
\label{R2}
Our first main result appears in \cref{thm:DR:convergence}
where we prove the \emph{strong} convergence
of the Douglas--Rachford algorithm 
when one of the operators is 
uniformly monotone with a supercoercive\footnote{Recall that $f$ is {\em supercoercive} when $f(\cdot)/ \|\cdot\|$ is coercive.} modulus
(this is always true when the operator 
is a subdifferential of a uniformly convex function). 
The convergence relies on the mild assumption that a normal solution 
exists.
\item[\rectangled{\cnt}] 
Our second main result appears 
in \cref{thm:PR:convergence} which provides an
analogous conclusion to the result in 
(C2) for the Peaceman--Rachford 
algorithm. Up to the authors' knowledge, this is the first proof 
of the (strong) convergence of the shadow sequence 
of the Peaceman--Rachford algorithm in the inconsistent case.
\item[\rectangled{\cnt}] 
The main optimization results are presented in 
\cref{sec:5}; namely, in \cref{thm:DR:convergence:fg} 
and \cref{thm:PR:convergence:fg}. 
In \cref{prop:example:fg} we provide 
a situation where a unique normal solution exists.
Computationally useful estimates of the gap vector (between the two disjoint domains) are provided in \cref{fact:pazybr}.

\item[\rectangled{\cnt}]
 The above results on the Douglas--Rachford and Peaceman--Rachford algorithms for (possibly inconsistent) optimization problems are numerically illustrated by means of example problems in $d$ variables, for $d=2$ (for which visualization is possible) and for $d \in\{ 70, 100, 1000\}$.  These example problems involve an affine set and a box as the {\em disjoint} domains of the two operators, respectively.  Numerical experiments are carried out to investigate the ``best'' values of the algorithmic parameters, including the relaxation parameter $\lambda$, for various instances of the problems.  Comparisons are also made with (the celebrated) Dykstra's projection algorithm~\cite{BoyleDykstra}.
%
\end{enumerate}

\subsection*{Organization and notation}
This paper is organized as follows: 
\cref{sec:2} contains a collection of auxiliary results
and examples of resolvents that are not necessarily 
proximal mappings.
In \cref{section:3} we provide key results concerning 
the generalized fixed point set and connection to the normal 
(generalized) solutions.
Our main results appear in \cref{sec:4} 
and the optimization counterpart of these results 
appear in \cref{sec:5}.  Numerical experiments are presented in \cref{sec:num_exp}.  Finally, \cref{sec:conc} provides concluding remarks and future directions of research.

Our notation is standard and follows largely, e.g., 
\cite{Rock70} and \cite{BC2017}.





\section{Auxiliary results}
\label{sec:2}	
\subsection{Nonexpansive mappings and their minimal displacement vector}

\begin{definition}
    Let $C$ be a nonempty closed convex subset of $X$ and fix $x\in X$. Then $p\in C$ is the {\em projection} of $x$ onto $C$ if $\|x-p\|= \inf_{y\in C} \|x-y\| =: d_C(x)$. We denote $p$ as $P_C(x)$.
\end{definition} 
\begin{lemma}
\label{lem:proj:scal}
Let $\alpha\in \RR $ and let $C$ 
be a nonempty closed convex subset of $X$.
Then $P_{\alpha C}(0)=\alpha P_{C}(0)$.
\end{lemma}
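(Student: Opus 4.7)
The plan is to prove the identity by unwinding the variational characterization of the projection of the origin, namely that $P_D(0)$ for a nonempty closed convex set $D$ is the unique element of minimum norm in $D$, and then transporting this minimization through the scaling $x\mapsto \alpha x$.

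First I would dispose of the degenerate case $\alpha=0$: since $C\neq\varnothing$, we have $\alpha C=\{0\}$, so $P_{\alpha C}(0)=0=\alpha P_C(0)$, and the identity is immediate.

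Next, for $\alpha\neq 0$, I would note that the map $x\mapsto \alpha x$ is a bijective homeomorphism of $X$ onto itself that preserves nonemptiness, closedness, and convexity, so $\alpha C$ is nonempty, closed, and convex, and therefore $P_{\alpha C}(0)$ is a well-defined singleton. I would then write a generic $y\in\alpha C$ as $y=\alpha x$ with $x\in C$ and compute $\|y\|^2=\alpha^2\|x\|^2$. Since $\alpha^2>0$, the unique minimizer of $y\mapsto \|y\|^2$ over $\alpha C$ is obtained exactly at $y=\alpha x^\star$, where $x^\star$ is the unique minimizer of $x\mapsto \|x\|^2$ over $C$, i.e., $x^\star=P_C(0)$. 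Thus $P_{\alpha C}(0)=\alpha P_C(0)$.

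There is essentially no obstacle here; the only thing to be a bit careful about is the case $\alpha<0$, but since we only ever see $\alpha^2$ in the objective, the sign plays no role, and the identity holds for every real $\alpha$. This lemma will serve in the sequel when we need to move scalar multiples in and out of the projection onto the closure of a range or convex hull used to define the minimal displacement vector.
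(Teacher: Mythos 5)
Your proof is correct, and it takes a genuinely different route from the paper. The paper argues via the variational (obtuse-angle) characterization of the projection of $0$: for $p\in C$, $p=P_C(0)$ iff $\langle 0-p,\,c-p\rangle\le 0$ for all $c\in C$, and scaling both arguments of the inner product by $\alpha$ multiplies the expression by $\alpha^2\ge 0$, so the inequality over $C$ is equivalent to the corresponding inequality over $\alpha C$ at the candidate point $\alpha p$; this yields $\alpha p=P_{\alpha C}(0)$ in one chain of equivalences valid for every real $\alpha$ with no case split. (As printed the paper's chain reads $\langle c-0,\,c-p\rangle\le 0$, which is not the correct variational inequality --- the first slot should be $0-p$ --- but the intended argument and the scaling step are clear.) You instead use the equivalent minimum-norm description of $P_D(0)$ and transport the minimization through the bijection $x\mapsto\alpha x$ between $C$ and $\alpha C$, using $\|\alpha x\|^2=\alpha^2\|x\|^2$; this forces you to treat $\alpha=0$ separately and to verify explicitly that $\alpha C$ is nonempty, closed, and convex so that $P_{\alpha C}(0)$ is well defined. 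Both arguments are elementary and sound; the paper's is marginally more compact and avoids the case split, while yours makes the well-definedness of $P_{\alpha C}(0)$ explicit, a point the paper leaves tacit.
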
	
\begin{proof}
Indeed, let $p\in C$.
Then, by \cite[Theorem 3.16]{BC2017}, we have that
$p=P_{C}(0)\siff (\forall c\in C) $
$\scal{0-p}{c-p}\le 0\siff (\forall c\in C) \scal{0-\alpha p}{\alpha c- \alpha p}\le 0
\siff \alpha p= P_{\alpha C}(0)$.
\end{proof}	

\begin{definition}\label{def:nonexp+}
    Let $T\colon X\to X$ and let $(x,y)\in X\times X$ and let $\lambda\in \left]0,1\right[$.
Recall that  $T$ 
is \emph{nonexpansive} if 
$\norm{Tx-Ty}\le \norm{x-y}$
and $T$ 
is \emph{$\lambda$-averaged} if 
$\norm{Tx-Ty}^2+\left(\frac{1-\lambda}{\lambda}\right)\norm{(\Id-T)x-(\Id-T)y}^2\le \norm{x-y}^2$.
\end{definition}

The next result guarantees the convexity of 
the range of the displacement mapping
associated with nonexpansive mappings.
\begin{lemma}
\label{lem:rancon}
Let $\lambda\ge 0$ and let $N\colon X\to X$
be nonexpansive. 
Then $\cran (\lambda(\Id-N))$ is convex.
\end{lemma}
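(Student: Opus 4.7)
The plan is to exploit the classical fact that the closure of the range of a maximally monotone operator on a Hilbert space is convex. The argument proceeds in three short steps; the scaling by $\lambda$ is then trivial.

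First, I would verify that $\Id - N$ is monotone. Given $x, y \in X$, a short computation using the Cauchy--Schwarz inequality and the nonexpansiveness of $N$ yields
\begin{equation*}
\scal{(x - Nx) - (y - Ny)}{x - y} = \norm{x - y}^2 - \scal{Nx - Ny}{x - y} \ge \norm{x-y}^2 - \norm{Nx - Ny}\,\norm{x-y} \ge 0.
\end{equation*}

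Second, I would argue that $\Id - N$ is in fact maximally monotone. Indeed, it is single-valued, everywhere defined on $X$, and Lipschitz continuous (with constant at most $2$). A monotone operator on a real Hilbert space that is single-valued, continuous, and defined on all of $X$ is maximally monotone (see, e.g., \cite[Corollary~20.28]{BC2017}). Consequently, for any $\lambda > 0$, the scaled operator $\lambda(\Id - N)$ is also maximally monotone.

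Third, I would invoke the classical result that $\cran A$ is convex whenever $A\colon X\rras X$ is maximally monotone on a Hilbert space (see, e.g., \cite[Theorem~21.1]{BC2017}, rooted in the Br\'ezis--Haraux theory). This yields the conclusion for $\lambda > 0$; for $\lambda = 0$, the range collapses to $\{0\}$ which is trivially convex.

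The main obstacle is essentially bookkeeping rather than conceptual: once the range-convexity theorem for maximally monotone operators is available, the proof is immediate. A self-contained alternative would need to construct, given $u_i = x_i - N x_i$ for $i = 1, 2$ and $\alpha\in \opint{0,1}$, a sequence $(z_n)_\nnn$ with $z_n - N z_n \to \alpha u_1 + (1-\alpha)u_2$; this is typically achieved via fixed points of strictly contractive perturbations of $N$ (in the spirit of Pazy and Reich), and that is where the real technical content would lie.
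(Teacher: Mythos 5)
Your proof is correct and follows essentially the same route as the paper: establish that $\Id - N$ (hence $\lambda(\Id-N)$) is maximally monotone, then invoke the convexity of $\cran$ for maximally monotone operators. The paper simply cites \cite[Example~20.29~and~Corollary~21.14]{BC2017} for these two facts, whereas you expand the first into a short direct verification of monotonicity plus the continuity/full-domain argument; the reference you want for the range-convexity step is \cite[Corollary~21.14]{BC2017} rather than Theorem~21.1.
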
	
\begin{proof}
This is a direct consequence of  \cite[Example~20.29~and~Corollary~21.14]{BC2017}.
\end{proof}	

\begin{lemma}
\label{lem:fix:shift}
Let $T\colon X\to X$ and let $w\in X$.
Then $\fix T(\cdot +w)=-w+\fix (w+T)$.
\end{lemma}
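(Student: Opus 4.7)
The plan is to prove the set equality via a single chain of equivalences using the elementary substitution $y = x + w$, which identifies fixed points of $T(\cdot + w)$ with fixed points of $w + T$ shifted by $-w$.

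More precisely, I would start from an arbitrary $x \in X$ and expand the definition of the left-hand side: $x \in \fix T(\cdot + w)$ means exactly $x = T(x + w)$. Adding $w$ to both sides, this is equivalent to $x + w = w + T(x + w)$, which says that the point $y := x + w$ satisfies $y = w + T(y)$, i.e., $y \in \fix(w + T)$. Since $x = -w + y$, this is in turn equivalent to $x \in -w + \fix(w + T)$. Reading this chain backward establishes the reverse inclusion simultaneously, so the biconditional gives the claimed set equality.

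I do not anticipate any obstacle here: the argument is purely algebraic and uses no property of $T$ beyond being a (possibly set-valued, though in this statement single-valued) map on $X$. The lemma is essentially a bookkeeping identity that will be invoked later when translating the splitting operator $T_\lambda$ by the minimal displacement vector in order to relate the governing sequence of the Douglas--Rachford or Peaceman--Rachford iteration to fixed points of a perturbed operator.
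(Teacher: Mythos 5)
Your proposal is correct and matches the paper's argument essentially verbatim: both proceed by the same chain of equivalences, rewriting $x = T(x+w)$ as $x+w = w + T(x+w)$ so that $x+w \in \fix(w+T)$ and hence $x \in -w + \fix(w+T)$.
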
	
\begin{proof}
Indeed, let $x\in X$.
Then 
$x\in \fix T(\cdot +w)
\siff x =T(x +w)
\siff -w+x+w=T(x+w)
\siff x+w\in \fix (w+T)
\siff x\in -w+\fix (w+T)$.
\end{proof}

\begin{lemma}
\label{lem:fix:p}
Let $T\colon X\to X$ be nonexpansive,
let $\gap=P_{{\cran}(\Id-T)}0$
and 
suppose that $x\in \fix (\gap+T)$. 
Let $\nnn$. Then $T^n x=x-n\gap$.
\end{lemma}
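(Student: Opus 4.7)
The plan is to induct on $n$ and, in fact, to establish the stronger identity $T(x - k\gap) = x - (k+1)\gap$ for every integer $k \ge 0$. Once this one-step shift is available, chaining it from $k=0$ through $k=n-1$ yields $T^n x = x - n\gap$. The base case $k=0$ is just a restatement of the hypothesis $x \in \fix(\gap + T)$: namely, $Tx = x - \gap$.

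For the inductive step, I would assume $T(x - (k-1)\gap) = x - k\gap$ and examine the displacement $(x - k\gap) - T(x - k\gap)$, sandwiching its norm between $\norm{\gap}$ from above and $\norm{\gap}$ from below. The upper bound comes from nonexpansiveness of $T$ applied to the pair $x - k\gap$ and $x - (k-1)\gap$, together with the inductive hypothesis:
\begin{equation*}
	\norm{T(x - k\gap) - (x - k\gap)} = \norm{T(x - k\gap) - T(x - (k-1)\gap)} \le \norm{\gap}.
\end{equation*}
The lower bound follows from the observation that $(x - k\gap) - T(x - k\gap) \in \ran(\Id - T) \subseteq \cran(\Id - T)$; by \cref{lem:rancon} this set is closed and convex, and so $\gap = P_{\cran(\Id - T)}(0)$ is its \emph{unique} element of minimum norm, forcing every element to have norm at least $\norm{\gap}$.

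Equality of norms at the minimum, combined with the uniqueness of the norm-minimizer of a closed convex set in a Hilbert space, then pins down the vector itself: $(x - k\gap) - T(x - k\gap) = \gap$, i.e., $T(x - k\gap) = x - (k+1)\gap$, closing the induction. The main --- and essentially only --- obstacle is articulating this squeeze cleanly; the key non-trivial ingredient is the convexity of $\cran(\Id - T)$ supplied by \cref{lem:rancon}, without which the argument would collapse to a mere norm inequality rather than the desired pointwise identity.
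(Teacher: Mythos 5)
Your proposal is correct and follows essentially the same route as the paper's proof: establish $\norm{\gap}\le\norm{(\Id-T)(x-k\gap)}\le\norm{\gap}$ by combining the minimum-norm property of $\gap$ with nonexpansiveness, and then upgrade the norm equality to a vector identity. The only cosmetic difference is that the paper invokes \cite[Lemma~1]{Pazy1970} for that last upgrade, whereas you spell out the underlying argument directly—uniqueness of the nearest point in the closed convex set $\cran(\Id-T)$, whose convexity is supplied by \cref{lem:rancon}—and you obtain the upper bound via a single application of nonexpansiveness plus the inductive hypothesis, rather than by iterating nonexpansiveness $n$ times as the paper does; both are equally valid.
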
	
\begin{proof}
We proceed by induction on $\nnn$.
The base case at $n=0$ is clear.
Now suppose for some  $\nnn$
we have 
$T^n x=x-n\gap$.
The definition of $\gap$ and the nonexpansiveness of $T$
imply
$\norm{\gap}\le \norm{(\Id-T)T^n x}=\norm{T^n x-T^{n+1}x}\le \norm{x-Tx}=\norm{\gap}$. The first inequality follows from the fact that $v$ is the element in ${\cran}(\Id-T)$ with minimum norm, and the fact that $(\Id-T)T^n x\in {\ran}(\Id-T)\subseteq {\cran}(\Id-T)$. The last equality follows from the definition of $x$. Therefore,
$\norm{T^n x-T^{n+1}x}=\norm{\gap}$. By the uniqueness of $v$ we conclude that $T^n x-T^{n+1}x=\gap$.
Now use the inductive hypothesis to complete the proof.	 
\end{proof}	

Let 
$\lambda\in \left]0,1\right[$
and suppose that
$T\colon X\to X$ is  $\lambda$-averaged.
Let $x\in X$.
Recall that (see, e.g., \cite[Proposition~1.2]{BR77})
\begin{equation}
\label{e:v:limit}
T^n x- T^{n+1} x\to P_{\cran(\Id-T)}0.
\end{equation}			

\subsection{Further notions of monotonicity}
\begin{definition}\label{def:mon}
Let $C\colon X\rras X$ be monotone. 
Then
\begin{enumerate}
\item \label{def:unimon}
			$C $ is \emph{ uniformly monotone} with a modulus 
			$\phi\colon \RR_+\to\left[0,+\infty\right] $ if 
			$\phi$ is increasing, vanishes only at $0$, and 
			
   \begin{equation}
   \{(x,x^*),(y,y^*)\}\subseteq \gra C
\RA \scal{x-y}{x^*-y^*}\ge \phi(\norm{x-y}).
\end{equation}
\item \label{def:strmon}
$C $ is \emph{strongly monotone} with a constant $\beta>0$
if $C-\beta \Id$ is monotone, i.e., 
   \begin{equation}
   \{(x,x^*),(y,y^*)\}\subseteq \gra C
\RA \scal{x-y}{x^*-y^*}\ge \beta \norm{x-y}^2.
\end{equation}
\item 
$C $ is \emph{$3^*$ monotone} if $(\forall (y,z^*)\in \dom C\times \ran C)$ 
\begin{equation}
 \inf_{(x,x^*)\in\gra C} \scal{x-y}{x^*-z^*}>-\infty.
\end{equation}
\item $C$ is
\emph{cyclically monotone} if, for every $n\ge 2$,
for every $(x_1,\ldots,x_{n+1})\in X^{n+1}$
and every $(x_1^*,\ldots,x_{n}^*)\in X^{n}$ 
\begin{align}
\label{eq:def:sn}
\left.\begin{array}{r@{\mskip\thickmuskip}l}
    (x_1,x_1^*)\in \gra C \\
    { \vdots\;\;\;\;\;\;\;\;\;}\\
    (x_n,x_n^*)\in \gra C \\
    x_{n+1}=x_1
\end{array} \right\}
\quad \implies \quad
\sum_{i=1}^{n}\scal{x_{i+1}-x_i}{x_i^*}\le  0.
\end{align}
\end{enumerate}
\end{definition}
The importance of uniform (respectively strong) monotonicity is,
in fact, motivated by its close connection to 
the notions of uniform (respectively strong)
convexity as we see below.
\begin{fact}
\label{f:func-sd}
Let $f\colon X\to \left]-\infty,+\infty\right]$.
\begin{enumerate}
    \item 
    \label{f:func-sd:i}
    Suppose that $f$
				is uniformly convex with modulus 
				$\phi$.
				Then $\partial f$ is uniformly monotone with modulus $2\phi$. 
    \item 
    \label{f:func-sd:ii}
    Suppose that $f$ is $\beta$-strongly convex for some $\beta>0$.
    Then $\partial f$ is $\beta$-strongly monotone.
\end{enumerate}
				
			\end{fact}	
			\begin{proof}
   \cref{f:func-sd:i}:
				See \cite[Theorem~3.5.10]{Za02} and also \cite[Example~22.4(iii)]{BC2017}. \\
       \cref{f:func-sd:ii}:
				See \cite[Example~22.4(iv)]{BC2017}.
			\end{proof}	

\subsection{Examples of resolvents that are not necessarily proximal mappings}

We now present a collection of resolvents of maximally monotone operators that are not necessarily subdifferentials. These results are interesting on their own, since the computation of the resolvent is not straightforward in general.

\begin{proposition}
\label{ex:general:Id:S:res}
Let $(\alpha,\beta)\in \RR_+\times \RR$.
Suppose that $S\colon X\to X$ is continuous, linear, and single-valued such
that  $S$ and $-S$ are monotone and $S^2=-\gamma \Id$
where $\gamma \ge 0$.
Suppose that $A=\alpha \Id+\beta S$.
Then 
\begin{equation}
    \label{eq:skew:resol:form}
    J_A	=\tfrac{1}{(1+\alpha)^2+\beta^2\gamma }((1+\alpha) \Id-\beta S),
\end{equation}	
and
\begin{equation}
    \label{eq:skew:rresol:form}
    R_A	=\tfrac{1-\alpha^2-\beta^2\gamma}{(1+\alpha)^2+\beta^2\gamma } \Id-\tfrac{2\beta}{(1+\alpha)^2+\beta^2\gamma} S.
\end{equation}	
\end{proposition}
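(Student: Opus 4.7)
The plan is a direct verification by inverting $\Id + A$ explicitly. First I would write
\[
\Id + A = (1+\alpha)\Id + \beta S,
\]
so that $J_A$ is by definition the inverse of this operator (whenever it exists). The assumptions that $S$ and $-S$ are both monotone force $\scal{Sx}{x}=0$ for all $x$, i.e.\ $S$ is skew; combined with $\alpha\ge 0$, this implies $A$ is monotone (and in fact maximally monotone, since $A$ is continuous, linear, and everywhere defined), so the resolvent is well-defined and single-valued on $X$.

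The key algebraic step is to exploit $S^2=-\gamma\Id$ through the identity
\[
\bigl((1+\alpha)\Id+\beta S\bigr)\bigl((1+\alpha)\Id-\beta S\bigr)
=(1+\alpha)^2\Id-\beta^2 S^2
=\bigl((1+\alpha)^2+\beta^2\gamma\bigr)\Id.
\]
Since $\alpha\ge 0$ and $\gamma\ge 0$ we have $(1+\alpha)^2+\beta^2\gamma>0$, so $(1+\alpha)\Id+\beta S$ is invertible, and dividing by this positive scalar yields \cref{eq:skew:resol:form}.

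For \cref{eq:skew:rresol:form}, I would simply plug the formula for $J_A$ into $R_A=2J_A-\Id$ and collect terms:
\[
R_A=\frac{2(1+\alpha)}{(1+\alpha)^2+\beta^2\gamma}\Id-\frac{2\beta}{(1+\alpha)^2+\beta^2\gamma}S-\Id,
\]
and then observe that $2(1+\alpha)-\bigl((1+\alpha)^2+\beta^2\gamma\bigr)=1-\alpha^2-\beta^2\gamma$, which gives the claimed identity. There is no real obstacle here: the only thing that could go wrong is the factorization step, and that works cleanly because $S$ commutes with $\Id$ and $S^2$ is a scalar multiple of $\Id$; the rest is pure bookkeeping.
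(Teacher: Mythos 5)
Your proof is correct and follows essentially the same route as the paper: both verify that $(1+\alpha)\Id+\beta S$ composed with the proposed inverse gives $\Id$, using $S^2=-\gamma\Id$ to kill the $S^2$ term, and then obtain $R_A$ by substitution into $2J_A-\Id$. Your version adds a useful preliminary observation (that $S$ must be skew so $A$ is maximally monotone and hence $J_A$ is well-defined everywhere), which the paper leaves implicit, but the core computation is identical.
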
	
\begin{proof}
Indeed, set $T=\tfrac{1}{(1+\alpha)^2+\beta^2\gamma }((1+\alpha) \Id-\beta S)$.
Then  
\begin{subequations}
    \begin{align}
        (\Id+A)T&=\tfrac{1}{(1+\alpha)^2+\beta^2\gamma }((1+\alpha) \Id+\beta S)((1+\alpha) \Id-\beta S)
        \\
        &=	\tfrac{1}{(1+\alpha)^2+\beta^2\gamma }((1+\alpha)^2\Id +(1+\alpha)\beta S-\beta^2S^2-(1+\alpha)\beta S)
        \\
        &=\Id.
    \end{align}
\end{subequations}	
This proves \cref {eq:skew:resol:form}. The formula 
in \cref {eq:skew:rresol:form} is a direct consequence of 
\cref{eq:skew:resol:form}. 
\end{proof}

Let $L\colon X\to X$ be continuous and linear.
Recall that the \emph{adjoint}
of $L$  is the unique linear operator $L^*\colon X\to X$
that satisfies $(\forall (x,y)\in X\times X)$
$\scal{x}{L^*y}=\scal{Lx}{y}$.

In the following, we use
$\ball{z}{\rho}$ to denote the {\em closed ball} in $X$ with centre $z$ and radius $\rho$. Namely,
\[
\ball{z}{\rho}:=\{ y\in X\::\: \|z-y\|\le \rho\}.
\]
Given a closed set $C\subseteq X$, the {\em normal cone operator} associated with $C$ is denoted by $N_C$ and defined as
    \[
    N_C(x):=\left\{ 
    \begin{array}{lc}
        \{w\in X\::\: \langle w,y-x \rangle \le 0,\, \forall y\in C\}\,, &  \text{ if }x\in C, \\
         \varnothing\,, &  \text{ if }x\not\in C.
    \end{array}\right.
    \]

\begin{proposition}
\label{ex:Id:S}
Let $(\alpha,\beta)\in  \RR_+\times \RR$.
Suppose that $S\colon X\to X$ is continuous, linear, and single-valued such
that  $S$ is monotone, $S^*=-S$ 
and $S^2=-\gamma \Id$
where $\gamma \ge 0$.
Suppose that $L=\alpha \Id+\beta S$
and set $A=L+N_{\ball{0}{1}}$.
Then $A$ is maximally monotone and 
\begin{equation}
    \label{eq:nice:res}
    J_A	x=
    \begin{cases}
        \tfrac{1}{(1+\alpha)^2+\beta^2\gamma }((1+\alpha) \Id-\beta S)x,
        &\norm{x}^2\le (1+\alpha)^2+\beta^2\gamma;
        \\
        \tfrac{1}{\norm{x}^2}\Big(\sqrt{\norm{x}^2- \beta^2\gamma}\Id-\beta  S\Big)x,
        &\text{otherwise}.
    \end{cases}
\end{equation}	
\end{proposition}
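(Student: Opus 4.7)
The plan is to first establish maximal monotonicity of $A$, then derive the resolvent formula by a case analysis on whether $p=J_A x$ lies in the interior or on the boundary of the unit ball.

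For maximal monotonicity, note that $L=\alpha\Id+\beta S$ is continuous, linear, monotone (since $\alpha\ge 0$ and $S$ is monotone) with $\dom L = X$, hence maximally monotone. The normal cone $N_{\ball{0}{1}}$ is maximally monotone as the subdifferential of the indicator of the closed unit ball. Since $\dom L = X$ meets $\inte \dom N_{\ball{0}{1}}=\ball{0}{1}\neq\emp$, Rockafellar's sum theorem gives that $A=L+N_{\ball{0}{1}}$ is maximally monotone.

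For the resolvent, let $x\in X$ and set $p=J_A x$, equivalently $x-p\in Lp+N_{\ball{0}{1}}(p)$, so that
\begin{equation*}
x=(1+\alpha)p+\beta Sp+N_{\ball{0}{1}}(p).
\end{equation*}
Two preliminary identities, which will be used throughout, follow from $S^*=-S$ and $S^2=-\gamma\Id$: for every $y\in X$,
\begin{equation*}
\scal{y}{Sy}=\scal{S^*y}{y}=-\scal{Sy}{y}\implies \scal{y}{Sy}=0,
\quad \norm{Sy}^2=\scal{y}{-S^2y}=\gamma\norm{y}^2.
\end{equation*}

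\textbf{Case 1:} $\norm{p}<1$. Then $N_{\ball{0}{1}}(p)=\{0\}$, so $p=J_L x$, and \cref{ex:general:Id:S:res} (applied with $A$ replaced by $L$) yields $p=\tfrac{1}{(1+\alpha)^2+\beta^2\gamma}((1+\alpha)\Id-\beta S)x$. Using the two identities above,
\begin{equation*}
\norm{p}^2=\tfrac{(1+\alpha)^2\norm{x}^2-2(1+\alpha)\beta\scal{x}{Sx}+\beta^2\norm{Sx}^2}{((1+\alpha)^2+\beta^2\gamma)^2}=\tfrac{\norm{x}^2}{(1+\alpha)^2+\beta^2\gamma},
\end{equation*}
so this case applies precisely when $\norm{x}^2\le(1+\alpha)^2+\beta^2\gamma$, with boundary case $\norm{p}=1$ admissible here.

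\textbf{Case 2:} $\norm{p}=1$ and Case 1 is insufficient. Then $N_{\ball{0}{1}}(p)=\RP p$, so there exists $\mu\ge 0$ with $x=(1+\alpha+\mu)p+\beta Sp$. Setting $c=1+\alpha+\mu\ge 1+\alpha$ and solving $(c\Id+\beta S)p=x$ via multiplication by $(c\Id-\beta S)$ (using $(c\Id+\beta S)(c\Id-\beta S)=(c^2+\gamma\beta^2)\Id$) gives
\begin{equation*}
p=\tfrac{1}{c^2+\gamma\beta^2}(c\Id-\beta S)x.
\end{equation*}
The same computation as in Case 1 shows $\norm{p}^2=\norm{x}^2/(c^2+\gamma\beta^2)$, and imposing $\norm{p}=1$ forces $c^2+\gamma\beta^2=\norm{x}^2$, i.e.\ $c=\sqrt{\norm{x}^2-\gamma\beta^2}$ (the positive root, which requires $\norm{x}^2\ge\gamma\beta^2$). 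Substituting yields the second branch of \cref{eq:nice:res}. The requirement $c\ge 1+\alpha$ translates exactly to $\norm{x}^2\ge(1+\alpha)^2+\gamma\beta^2$, matching the case split. Finally, at the boundary $\norm{x}^2=(1+\alpha)^2+\beta^2\gamma$ one has $\sqrt{\norm{x}^2-\gamma\beta^2}=1+\alpha$, so the two branches of \cref{eq:nice:res} agree, confirming consistency.

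The main obstacle is purely bookkeeping: verifying that the derived $\mu$ (equivalently $c$) is nonnegative so that the normal cone inclusion is legitimate, and checking that the two formulas match at the interface $\norm{x}^2=(1+\alpha)^2+\beta^2\gamma$. Both follow from the identity $\norm{p}^2=\norm{x}^2/(c^2+\beta^2\gamma)$ once the skew-symmetry and $S^2=-\gamma\Id$ identities are in hand.
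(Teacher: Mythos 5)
Your proof is correct and follows essentially the same route as the paper's: case analysis on whether $p=J_Ax$ lies in the interior or on the boundary of $\ball{0}{1}$, reduction to $J_L$ in the interior case via \cref{ex:general:Id:S:res}, and explicit solution in the boundary case using $\scal{y}{Sy}=0$ and $\|Sy\|^2=\gamma\|y\|^2$. The only (minor and cleaner) deviation is in Case 2, where you determine the sign of $c=1+\alpha+\mu$ directly from $\mu\ge 0$, whereas the paper instead invokes the monotonicity of $J_A$ to conclude $\scal{x}{u}\ge 0$.
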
	

\begin{proof}
The maximal monotonicity of 
$A$ follows from, e.g., \cite[Corollary~25.5(i)]{BC2017}.	
We now turn to \cref{eq:nice:res}.
Indeed, let $u\in X$ and 
observe that 	
$u=J_A x \siff x\in (1+\alpha )u+\beta Su+N_{\ball{0}{1}}u
\siff (\exists r\ge 0)$ 
\begin{equation}
    \label{e:key:connec}
    x= (1+\alpha )u+\beta Su+ru,
\end{equation}	
where we used \cite[Example 6.39]{BC2017}. We proceed by cases.
\textsc{Case~1:}
$\norm{u}<1$. In this case $r=0$
and \cref{e:key:connec} yields $x= (1+\alpha )u+\beta Su$.
By \eqref{e:key:connec} with $r=0$ we deduce that $\norm{x}^2=((1+\alpha)^2+\gamma \beta^2)\norm{u}^2$
and also that $u=J_Lx$. 
Now combine with \cref{ex:general:Id:S:res}
applied with $A$ replaced by $L$.
\textsc{Case~2:}
$\norm{u}=1$.
It follows from 
\cref{e:key:connec} that $r=\scal{x}{u}-(1+\alpha)$.
Therefore, \cref{e:key:connec}  becomes
$x=\scal{x}{u}u+\beta Su$.
Hence $\norm{x}^2=\norm{\scal{x}{u}u+\beta Su}^2=\scal{x}{u}^2+\beta^2\gamma$.
We therefore learn that 	$\scal{x}{u}^2=\norm{x}^2-\gamma \beta^2$. Because $J_A$ is (maximally) monotone by, e.g.,
\cite[Corollary~23.11(i)]{BC2017}
and $\{(0,0), (x,u)\}\subseteq \gra J_A$ 
we learn that $\scal{x}{u}\ge0$.
Therefore, we conclude that
$\scal{x}{u}=\sqrt{\norm{x}^2-\beta^2\gamma}$.
Altogether we rewrite \cref{e:key:connec}  as 
\begin{equation}
    \label{e:key:connec:i} 
    x=\sqrt{\norm{x}^2-\beta^2\gamma}u+\beta Su.
\end{equation}	
It is straightforward to verify that 
$u=\tfrac{1}{\norm{x}^2}(\sqrt{\norm{x}^2- \beta^2\gamma}\Id-\beta  S)x$ satisfies \cref{e:key:connec:i}.
The proof is complete.
\end{proof}

\begin{example}
\label{ex:Id:S:concrete}
Suppose that $X=\RR^2$,
let $\theta\in \left[0,\tfrac{\pi}{2}\right[$,
set 
\begin{equation}
    \rot=\rot_\theta=	\begin{pmatrix}
        \cos(\theta) &-\sin (\theta )
        \\
        \sin (\theta) &	\cos(\theta)
    \end{pmatrix},
\end{equation}
and set $A=\rot+N_{\ball{0}{1}}$.
Then $A$ is maximally monotone and 
strongly monotone and 
\begin{equation}
    J_A	x=
    \begin{cases}
        \tfrac{1}{2(1+\cos(\theta)) }(\Id+\rot_{-\theta})x,
        &\norm{x}^2\le 2(1+\cos\theta);
        \\
        \tfrac{1}{\norm{x}^2}\Big(\Big(\sqrt{\norm{x}^2
            - \sin^2(\theta )}-\cos(\theta)\Big)\Id+\rot_{-\theta}\Big)x,
        &\text{otherwise}.
    \end{cases}
\end{equation}	
\end{example}	
\begin{proof}
    Set $S=\begin{psmallmatrix}
    0&-1\\
    1&0
\end{psmallmatrix}$ and observe that 
$S$ is maximally monotone,
$S^{\mathsf{T}}=-S$,
$S^2=-\Id$, 
and $\rot=\cos( \theta) \Id+\sin (\theta )S$.
It is straightforward to verify that, likewise $\rot$, 
$A$ is $\cos (\theta) $-strongly monotone.
The conclusion now
follows from applying \cref{ex:Id:S} with $(\alpha,\beta,\gamma)$
replaced by $(\cos(\theta),\sin (\theta),1)$. 
\end{proof}	

\begin{example}
\label{ex:K:R}
Suppose that $X=\RR^2$, let $(u,u^\perp) \in \RR^2\times \RR^2$ 
such that 
$\norm{u}=\norm{u^\perp}=1$, $\scal{u}{u^{\sperp}}=0$,
and set $K=\menge{x\in X}{\scal{x}{u}\le 0}$.
Let 
$L
=\begin{pmatrix}
    \alpha&\beta\\
    \gamma&\delta
\end{pmatrix}	\in \RR^{2\times 2}$
be monotone\footnote{Recall that $L$
    is monotone if and only if $\alpha\ge 0$,
    $\delta\ge 0$ and $4\alpha\delta\ge (\beta+\gamma)^2$ 
    (see \cite[Lemma~6.1]{Lukenspaper}).
}
and set $A= L+N_K$.
Set $\kappa=1+\scal{u^\sperp}{Lu^\sperp}\geq1$ by the monotonicity of $L$.
Let $x\in \RR^2$.
Then we have
\begin{equation}
    \label{eq:res:L}
    J_L 
    =\tfrac{1}{(1+\alpha)(1+\gamma)-\beta\delta}
    \begin{pmatrix}
        1+\delta&-\beta
        \\
        -\gamma&1+\alpha	
    \end{pmatrix}	,
\end{equation}
and  
\begin{equation}
    \label{eq:res:A}
    J_A x
    =
    \begin{cases}
        J_Lx, &\scal{J_Lx}{u}<0;
        \\
        \tfrac{1}{\kappa} \scal{x}{u^\sperp}	u^\sperp,
        &\text{otherwise}.
    \end{cases}	
\end{equation}
\end{example}

\begin{proof}
The formula in \cref{eq:res:L} is clear.
We verify  \cref{eq:res:A}.
Let $(x,z)\in X\times X$.
Then $z=J_Ax\siff x\in z+Lz+N_Kz$. Observe that this implies that $z\in K$ and therefore we consider two cases.

\textsc{Case~1:} $\scal{z}{u}<0$.
This implies that $N_K z=\{0\}$
and, therefore, $x=(\Id+L)z$.
Equivalently, $z=J_Lx$.

\textsc{Case~2:} $\scal{z}{u}=0$.	
Observe that in this case
$N_Kz=\RR_+ u$. 
That is, $(\exists r\ge 0)$
such that $x=z+Lz+ru$.
Therefore, $r=\scal{x-Lz}{u}$.
We claim that $z=\tfrac{1}{\kappa}\scal{x}{u^\sperp}u^\sperp$
solves the equation
\begin{equation}
    \label{ex:verify:proj}
    x=z+Lz+\scal{x-Lz}{u}u.
\end{equation}	
Indeed, substituting for $z=\tfrac{1}{\kappa}\scal{x}{u^\sperp}u^\sperp$
in the right hand side of \cref{ex:verify:proj} yields
\begin{subequations}
    \begin{align}
        z+Lz+\scal{x-Lz}{u}u
        &=z
        +\scal{Lz}{u}u
        +\scal{Lz}{u^\sperp}u^\sperp
        +\scal{x}{u}u
        -\scal{Lz}{u}u
        \\
        &=z
        +\scal{Lz}{u^\sperp}u^\sperp
        +\scal{x}{u}u
        \\
        &=\tfrac{1}{\kappa}\scal{x}{u^\sperp}u^\sperp
        +\tfrac{1}{\kappa}\scal{Lu^\sperp}{u^\sperp}\scal{x}{u^\sperp}u^\sperp
        +\scal{x}{u}u
        \\
        &=\tfrac{1}{\kappa}\big(1+\scal{Lu^\sperp}{u^\sperp}\big)\scal{x}{u^\sperp}u^\sperp
        +\scal{x}{u}u
        \\
        &=\scal{x}{u^\sperp}u^\sperp
        +\scal{x}{u}u=x.
    \end{align}	
\end{subequations}
Hence, 
$z=\tfrac{1}{\kappa}\scal{x}{u^\sperp}u^\sperp$
solves   \cref{ex:verify:proj} as claimed.
The proof is complete. 
\end{proof}

\begin{example}
\label{ex:3*:not:sub}
Suppose that $X=\RR^2$,
let $\theta\in \left]0,\tfrac{\pi}{2}\right[$,
set 
\begin{equation}
    \rot=\rot_\theta=	
    \begin{pmatrix}
        \cos(\theta )&-\sin (\theta )
        \\
        \sin (\theta )&	\cos(\theta)
    \end{pmatrix},
\end{equation}
and let $U$ be nonempty closed convex subset in $\RR^2$.
Set 
\begin{equation}
    A=\rot+N_U.
\end{equation}
Then the following hold:
\begin{enumerate}
    \item
    \label{ex:3*:not:sub:i}
    $A$ is maximally monotone and strongly monotone.
    \item
    \label{ex:3*:not:sub:ii}
    $A$ is $3^*$ monotone.
    \item
    \label{ex:3*:not:sub:iii}
    $A$ is \emph{not}  cyclically monotone.
    Hence, $A$ is not a subdifferential operator.
\end{enumerate}	

\end{example}	

\begin{proof}
Set $S=\begin{psmallmatrix}
    0&-1\\
    1&0
\end{psmallmatrix}$ and observe that 
 $\rot=\cos (\theta) \Id+\sin (\theta) S$.

\cref{ex:3*:not:sub:i}:	
Clearly $\rot$ is maximally monotone and $\cos \theta$-strongly monotone and 
$N_U$ is maximally monotone.
The  maximal monotonicity of $A$ follows from, e.g., \cite[Corollary~25.5(i)]{BC2017}.
The  strong monotonicity of $A$  is an immediate consequence of 
the  strong monotonicity of $\rot$.

\cref{ex:3*:not:sub:ii}:	Combine \cref{ex:3*:not:sub:i} and \cite[Example~25.15(iv)]{BC2017}.

\cref{ex:3*:not:sub:iii}:
It follows from \cite[Example~4.6]{BBBR07} that 
$A$ is not  cyclically monotone.
Therefore by 
\cite[Theorem~B]{Rock1970} $A$ is not a subdifferential
operator.
%
%
%
\end{proof}

\begin{example}
\label{ex:conc:3*:not:sub}
Suppose that $X=\RR^2$, let 
\begin{equation}
    \rot=	\begin{pmatrix}
        \frac{1}{2} &-\frac{\sqrt{3}}{2}
        \\
        \frac{\sqrt{3}}{2} &	\frac{1}{2}
    \end{pmatrix},
\end{equation}
and let $K= \RR_{-}\times \RR$.

\begin{equation}
    A=\rot+N_K.
\end{equation}
Then the following hold:
\begin{enumerate}
    \item
    \label{ex:conc:3*:not:sub:i}
    $A$ is maximally monotone and strongly monotone.
    \item
    \label{ex:conc:3*:not:sub:ii}
    $A$ is $3^*$ monotone.
    \item
    \label{ex:conc:3*:not:sub:iii}
    $A$ is \emph{not}  cyclically monotone.
    Hence, $A$ is not a subdifferential operator.
    \item
    \label{ex:conc:3*:not:sub:iv}
    We have 
    \begin{equation}
        J_A\colon (\xi_1,\xi_2)\mapsto
        \begin{cases}
            \tfrac{1}{3} (\Id-R)(\xi_1,\xi_2), &\xi_1+\sqrt{3}\xi_2<0;
            \\
            \tfrac{2}{3} (0,\xi_2),
            &\text{otherwise}.
        \end{cases}	
    \end{equation}
\end{enumerate}		
\end{example}	

\begin{proof}
\cref{ex:conc:3*:not:sub:i}--\cref{ex:conc:3*:not:sub:iii}:
Apply \cref{ex:3*:not:sub}\cref{ex:3*:not:sub:i}--\cref{ex:3*:not:sub:iii}
with $(\theta,U)$ replaced by $(\tfrac{\pi}{3},K)$.
\cref{ex:conc:3*:not:sub:iv}:
Write  $K=\menge{(\xi_1,\xi_2)\in \RR^2}{\scal{(\xi_1,\xi_2)}{(1,0)}\le 0}$.
Now apply \cref{ex:K:R} with $(u,u^\sperp)$
replaced by $((1,0),(0,1))$
and 
$(\alpha,\beta,\gamma,\delta)$
replaced by 
$\Big(\tfrac{1}{2},-\tfrac{\sqrt{3}}{2},\tfrac{\sqrt{3}}{2},\tfrac{1}{2}\Big)$.
\end{proof}

\section{Generalized solutions and generalized fixed points}
\label{section:3}
Throughout the remainder of this paper, we set
\begin{empheq}[box=\mybluebox]{equation}
\label{def:T(AB)}
T=T_{(A,B)}=\tfrac{1}{2}(\Id+R_BR_A).
\end{empheq}
The well-defined 
\emph{minimal displacement vector }associated with $T$
(see \cref{lem:rancon})
is
\begin{empheq}[box=\mybluebox]{equation}
\label{eq:def:v}
\gap=P_{\cran(\Id-T)}(0),
\end{empheq}
and the \emph{generalized solution set} (this is also known as 
the \emph{set of normal solutions} (see \cite{Sicon})) is
\begin{empheq}[box=\mybluebox]{equation}
\label{def:gen:Z}
Z=\zer(-\gap+A+B(\cdot-\gap))=\menge{x\in X}{0\in -\gap+Ax+B(x-\gap)}.
\end{empheq}

We recall that
(see \cite[Theorem~2.5]{WMM2024})

\begin{equation}
\label{eq:A:ran:T}
{\cran}(\Id-T)=\overline{(\dom A-\dom B)\cap
    ( \ran{A}+\ran B)}.
\end{equation}
The following remark gives situations under which assumption \eqref{eq:A:ran:T} holds. We recall that case (i) below automatically holds when $(A,B)=(\partial f,\partial g)$ for $f,g$ convex, proper and lsc functions.
\begin{remark}
Assumption \cref{eq:A:ran:T} holds if 
one of the following holds (see \cite[Theorem~5.2]{MOR}):
\begin{enumerate*}
    \item
    $A$ and $B$ are $3^*$ monotone.
    \item
    $(\exists C\in \{A,B\})$
    $\dom C=X$ and C is $3^*$ monotone.
    \item
    $(\exists C\in \{A,B\})$
    $\ran C=X$ and C is $3^*$ monotone.
\end{enumerate*}		
\end{remark}	
\begin{lemma}
\label{lem:zeros}
Let $Z$ be as in \eqref{def:gen:Z}. We have $Z=J_A(\fix (\gap+T))$.	
\end{lemma}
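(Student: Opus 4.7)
The plan is to prove both inclusions directly by unwinding the resolvent/reflected-resolvent identities, mirroring the classical proof of \cref{eq:fix:zer} but carrying along the minimal displacement vector $\gap$. The essential observation is that $x\in\fix(\gap+T)$ is equivalent to $Tx=x-\gap$, hence (after multiplying by $2$) to $R_BR_Ax=x-2\gap$.

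For the inclusion $J_A(\fix(\gap+T))\subseteq Z$, I would pick $x\in\fix(\gap+T)$ and set $a=J_Ax$, so $x-a\in Aa$ and $R_Ax=2a-x$. Substituting into $R_BR_Ax=x-2\gap$ and solving the definition of $R_B$ gives $J_B(2a-x)=a-\gap$, which means $(2a-x)-(a-\gap)=a-x+\gap\in B(a-\gap)$. Adding this to $x-a\in Aa$ produces $\gap\in Aa+B(a-\gap)$, i.e., $a\in Z$.

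For the reverse inclusion $Z\subseteq J_A(\fix(\gap+T))$, I would start with $a\in Z$ and choose $a^*\in Aa$, $b^*\in B(a-\gap)$ with $a^*+b^*=\gap$. Setting $x=a+a^*$ immediately gives $J_Ax=a$ (since $x\in a+Aa$) and therefore $R_Ax=2a-x=a-a^*$. Using $b^*=\gap-a^*\in B(a-\gap)$ one checks that $J_B(a-a^*)=a-\gap$, whence $R_B(a-a^*)=2(a-\gap)-(a-a^*)=x-2\gap$. Thus $R_BR_Ax=x-2\gap$, giving $Tx=x-\gap$ and $x\in\fix(\gap+T)$, with $J_Ax=a$.

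There is no real obstacle here beyond careful algebra; the only subtlety is the correct bookkeeping of the $\gap$ translation inside $B(\cdot-\gap)$, which needs the intermediate identity $J_B(2J_Ax-x)=J_Ax-\gap$ characterizing membership in $\fix(\gap+T)$. Everything else is direct application of the definitions $J_A=(\Id+A)^{-1}$, $R_A=2J_A-\Id$, and $T=\tfrac12(\Id+R_BR_A)$.
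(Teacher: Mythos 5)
Your proof is correct. Every step checks out: the characterization $x\in\fix(\gap+T)\iff R_BR_Ax=x-2\gap$, the forward inclusion obtained by setting $a=J_Ax$ and deriving $J_B(2a-x)=a-\gap$, and the reverse inclusion by constructing $x=a+a^*$ from a decomposition $a^*+b^*=\gap$ with $a^*\in Aa$, $b^*\in B(a-\gap)$.

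Your route is genuinely different from the paper's. The paper does not unwind the resolvent identities at all; instead it reduces the claim to three cited facts: (1) the Douglas--Rachford operator for the \emph{perturbed} pair $(-\gap+A,\,B(\cdot-\gap))$ equals $T(\cdot+\gap)$ (from \cite[Proposition~2.24]{Sicon}), (2) the consistent-case identity \cref{eq:fix:zer} applied to that perturbed pair, and (3) the translation rule $J_{-\gap+A}=J_A(\gap+\cdot)$ together with the shift lemma \cref{lem:fix:shift} to convert $\gap+\fix T(\cdot+\gap)$ into $\fix(\gap+T)$. The paper's argument is shorter and exploits modularity --- the statement is literally \cref{eq:fix:zer} for a translated problem, so nothing new needs to be verified at the level of $J_A$, $J_B$, $R_A$, $R_B$. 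Your proof is more elementary and self-contained (it does not rely on the external reference \cite{Sicon} or on \cite[Proposition~23.17(ii)]{BC2017}), at the cost of essentially re-deriving the algebra that \cref{eq:fix:zer} already packages. Both are valid; the trade-off is brevity-by-citation versus transparency.
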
	
\begin{proof}
Indeed, it follows from \cite[Proposition~2.24]{Sicon}
that 
$T_{(-\gap+A,B(\cdot-\gap))}=T(\cdot+\gap)$ 
Combining with \cref{eq:fix:zer}, applied with
$(A,B) $
replaced by $(-\gap+A,B(\cdot-\gap))$,
and \cite[Proposition~23.17(ii)]{BC2017},
we learn that 	$Z=J_{-\gap +A}(\fix T(\cdot +\gap))
=J_A(\gap+\fix T(\cdot +\gap))$.	
The claim now follows from combining the last equality with \cref{lem:fix:shift} 
applied with $w$ replaced by $\gap$.
\end{proof}

\begin{lemma}
\label{lem:static:fix:zer}
Let $\lambda\in \left]0,1\right]$, set 
\begin{equation}\label{def:Tlambda}
    T_\lambda = (1-\lambda) \Id +\lambda R_BR_A,
\end{equation}	
and set $\gap_\lambda=
P_{\cran({\Id-T_\lambda})}(0)$.
Then $\gap_\lambda$ is well defined. Moreover
the following hold:
\begin{enumerate}
    \item
    \label{lem:static:fix:zer:0}
    ${\Id-T_\lambda}=\lambda(\Id-R_BR_A)=2\lambda(J_A-J_BR_A)$.
    \item
    \label{lem:static:fix:zer:i}
    $\gap_\lambda=2\lambda \gap$.
    \item
    \label{lem:static:fix:zer:ii}
    $\fix (\gap_\lambda+T_\lambda)
    =\fix(\gap+T)$.
\end{enumerate}

\end{lemma}
\begin{proof}
The claim that $\gap_\lambda $ is well defined 
follows from applying \cref{lem:rancon} with  $N$ replaced by $R_BR_A$.
\cref{lem:static:fix:zer:0}:
This fact clearly follows from \eqref{def:Tlambda} and the definitions.
\cref{lem:static:fix:zer:i}:
Recalling \cref{def:T(AB)},
observe that 
$T=T_{1/2}$ and hence 
$\cran(\Id-T_\lambda)=\cran(\Id-(1-\lambda)\Id-\lambda R_B R_A) = \cran(-\lambda(-\Id+R_B R_A)) = \cran(\lambda(2\Id-\Id-R_B R_A)) = \cran(2\lambda(\Id-\frac{1}{2}\Id-\frac{1}{2}R_B R_A)) =2\lambda \cran (\Id-T)$.
Now combine with \cref{lem:proj:scal} applied with 
$(\alpha, C) $ replaced by $(2\lambda,\cran(\Id-T)  )$.
\cref{lem:static:fix:zer:ii}:
Indeed, let $x\in X$. Then $x\in \fix (\gap_\lambda+T_\lambda)
\siff x=\gap_\lambda+(1-\lambda )x+\lambda R_BR_Ax
\siff \lambda x=\gap_\lambda+\lambda R_BR_Ax
\siff x=\tfrac{\gap_\lambda}{2\lambda } +\frac{1}{2}(x+R_BR_Ax)$.
Now combine with \cref{lem:static:fix:zer:i}.
\end{proof}	


\begin{proposition}
\label{fact:fix:p}
Let $\lambda\in \left]0,1\right]$ and set
$T_\lambda =(1-\lambda) \Id +\lambda R_BR_A
$.
Let	$x\in X$ and let $\nnn$. 
Suppose that 
$y\in \fix(\gap +T)$. Then the following hold:
\begin{enumerate}
    \item
    \label{fact:fix:p:i}
    $T_\lambda^n y=y-2\lambda n \gap \in \fix(\gap+T)$.
%
    \item
    \label{fact:fix:p:ii}
    Suppose that $Z=\{\overline{x}\}$. Then
    $J_AT_\lambda^n y=J_A(y-2\lambda n\gap)=\overline{x}$.
    \end{enumerate}
    Suppose that $Z=\{\overline{x}\}$ and $\lambda\in \left]0,1\right[$.
    Then we additionally have: 
    \begin{enumerate}
        \setcounter{enumi}{2}
    \item
    \label{fact:fix:p:0}
    $J_AT_\lambda^n x-J_BR_AT_\lambda^nx
    =\tfrac{1}{2\lambda}(\Id-T_\lambda)T_\lambda^n x
    =\tfrac{1}{2\lambda}(T_\lambda^{n}x-T_\lambda^{n+1}x)\to \tfrac{1}{2\lambda}\gap_\lambda=\gap
    $.
    \item
    \label{fact:fix:p:iii}
    $J_BR_AT_\lambda^n y
    =J_BR_A(y-2\lambda n\gap)=\overline{x}-\gap$.
\end{enumerate}
\end{proposition}	

\begin{proof}
    \cref{fact:fix:p:i}: 
    Combine \cref{lem:static:fix:zer}\cref{lem:static:fix:zer:i} 
and \cref{lem:fix:p} with $T$ replaced by $T_\lambda$.
\cref{fact:fix:p:ii}: 	
The first identity is a direct consequence of \cref{fact:fix:p:i}.
Now on the one hand, we have $Z=J_A(\fix(\gap+T))=\{\overline{x}\}$.
On the other hand, 
by \cref{fact:fix:p:i} we have $J_A(y-2\lambda nv)\in J_A(\fix(\gap+T))$.
Altogether, the conclusion follows.
\cref{fact:fix:p:0}:	
It follows from \cref{lem:static:fix:zer}\cref{lem:static:fix:zer:0}
that 
$J_AT_\lambda^n x-J_BR_AT_\lambda^nx
=\tfrac{1}{2\lambda}(\Id-T_\lambda)T_\lambda^n x$.
Now combine with \cref{e:v:limit} 
applied with $T$ replaced by $T_\lambda$
in view of \cref{lem:static:fix:zer}\cref{lem:static:fix:zer:i}.

\cref{fact:fix:p:iii}:
Combine  \cref{fact:fix:p:ii} and \cref{fact:fix:p:0}
applied with $x$ replaced by $y$.
\end{proof}	

We now recall the following 
key result by Minty which is of central importance in our proofs.
\begin{fact}[{\bf Minty's Theorem}]
\label{thm:minty}
Let $C\colon X\rras X$ be monotone. Then
\begin{equation}
    \label{eq:Minty}
    \gra C=\menge{(J_C x, J_{C^{-1}}x)}{x\in \ran (\Id+C)}.
\end{equation}
Moreover,
\begin{equation}
    \label{eq:Minty:2}
    \text{$C$ is maximally monotone $\siff$ $\ran (\Id+C)=X$.}
\end{equation}
\end{fact}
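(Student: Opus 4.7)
The plan is to handle the two displayed identities separately: the parametrization \cref{eq:Minty} is a direct consequence of monotonicity together with a resolvent identity, while the equivalence \cref{eq:Minty:2} combines an easy direction with the genuinely deep surjectivity theorem of Minty.

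First I would observe that because $C$ is monotone, both $J_C$ and $J_{C^{-1}}$ are single-valued on their respective domains $\ran(\Id+C)$ and $\ran(\Id+C^{-1})$: if $x\in a_i+Ca_i$ for $i\in\{1,2\}$, then $x-a_i\in Ca_i$, and monotonicity gives $\langle a_1-a_2,(x-a_1)-(x-a_2)\rangle=-\|a_1-a_2\|^2\ge 0$, forcing $a_1=a_2$. For the inclusion $\supseteq$ in \cref{eq:Minty}, I would take $x\in\ran(\Id+C)$, set $a:=J_Cx$, observe that $x-a\in Ca$, hence $a\in C^{-1}(x-a)$, hence $x\in(x-a)+C^{-1}(x-a)$, and conclude by the single-valuedness of $J_{C^{-1}}$ that $J_{C^{-1}}x=x-a$. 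This yields $(J_Cx,J_{C^{-1}}x)=(a,x-a)\in\gra C$. For $\subseteq$, given $(a,b)\in\gra C$, I would set $x:=a+b$, note that $x\in\ran(\Id+C)$, and verify directly that $a=J_Cx$ and $b=J_{C^{-1}}x$ by the same single-valuedness argument.

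For the equivalence \cref{eq:Minty:2}, the implication ($\Leftarrow$) would follow from a brief monotone-extension argument: assume $\ran(\Id+C)=X$ and let $(y,y^*)$ be monotonically related to $\gra C$. Using surjectivity, pick $(a,a^*)\in\gra C$ with $a+a^*=y+y^*$, so that $a-y=y^*-a^*$. Then monotonicity forces $0\le\langle a-y,a^*-y^*\rangle=-\|a-y\|^2\le 0$, whence $(y,y^*)=(a,a^*)\in\gra C$, and maximal monotonicity of $C$ follows.

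The main obstacle is the forward direction ($\Rightarrow$) of \cref{eq:Minty:2}, namely that maximal monotonicity implies $\ran(\Id+C)=X$. This is Minty's nontrivial surjectivity theorem, whose classical proof typically invokes Zorn's lemma in conjunction with a Yosida regularization or an explicit fixed-point construction for the firmly nonexpansive resolvent of an extension. Since the statement appears here as a \textbf{Fact} labeled \emph{Minty's Theorem} and is used only as a tool in the sequel, my plan is to cite it directly from the standard reference already employed in the paper, e.g., \cite[Theorem~21.1]{BC2017}, rather than reconstruct its proof in full.
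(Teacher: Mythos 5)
Your proposal is correct, and all the elementary steps check out: the single-valuedness argument, the $\supseteq$ and $\subseteq$ verifications of \cref{eq:Minty} via the observation $(a,x-a)\in\gra C$, and the ($\Leftarrow$) direction of \cref{eq:Minty:2} by picking $(a,a^*)\in\gra C$ with $a+a^*=y+y^*$ are all sound. Your route is, however, considerably more detailed than the paper's: the paper's entire proof of this Fact is the single line ``See \cite{Minty}'', deferring the whole statement to the original reference, whereas you reconstruct the Minty parametrization and the easy implication from first principles and cite only the genuinely deep part, namely that maximal monotonicity implies $\ran(\Id+C)=X$. That is a perfectly defensible trade: your version is self-contained for everything short of the surjectivity theorem, at the cost of length. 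One small inaccuracy in your framing of the hard direction: the classical proofs of the surjectivity half (e.g.\ via the Debrunner--Flor lemma, a projection argument, or a fixed-point theorem) do not rely on Zorn's lemma; Zorn's lemma enters in a \emph{different} result, namely that every monotone operator admits a maximally monotone extension. Since you are citing the surjectivity theorem rather than proving it, this does not affect the correctness of your plan, but the parenthetical description would be worth correcting.
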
	
\begin{proof}	
See \cite{Minty}.
\end{proof}

\begin{lemma}
\label{lem:pre:tele}
Let $\lambda\in \left]0,1\right]$ and set
\begin{equation}
    T_\lambda =(1-\lambda) \Id +\lambda R_BR_A.
\end{equation}	
Let $(x,y)\in X\times X$.
Then
\begin{align}
    \label{e:general:ineq}
    &\qquad\lambda \norm{x-y}^2 -\lambda \norm{T_\lambda   x-T_\lambda  y}^2-(1-\lambda) \norm{(\Id-T_\lambda  )x-(\Id-T_\lambda  )y}^2
    \nonumber
    \\
    &=4\lambda^2\scal{J_Ax-J_Ay}{J_{A^{-1}}x-J_{A^{-1}}y}
    +4\lambda^2\scal{J_BR_Ax-J_BR_Ay}{J_{B^{-1}}R_Ax-J_{B^{-1}}R_Ay}.
\end{align}	
Consequently we have:
\begin{enumerate}
    \item
    \label{lem:pre:tele:i}
    $ \norm{x-y}^2 - \norm{T_\lambda  x-T_\lambda  y}^2\ge 4\lambda\scal{J_Ax-J_Ay}{J_{A^{-1}}x-J_{A^{-1}}y}$.
    \item
    \label{lem:pre:tele:ii}
    $ \norm{x-y}^2 - \norm{T_\lambda  x-T_\lambda  y}^2\ge 4\lambda \scal{J_BR_Ax-J_BR_Ay}{J_{B^{-1}}R_Ax-J_{B^{-1}}R_Ay}$.
\end{enumerate}	

\end{lemma}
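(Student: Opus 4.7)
The plan is to reduce the identity \cref{e:general:ineq} to two elementary building blocks: (a) the "firm/reflective" decomposition for a single resolvent, and (b) the convex-combination norm identity relating $T_\lambda$ to $R_BR_A$. Consequences \cref{lem:pre:tele:i} and \cref{lem:pre:tele:ii} will then follow by monotonicity via \cref{thm:minty}.

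First, I would exploit the Minty decomposition $\Id = J_A + J_{A^{-1}}$, so that $R_A = J_A - J_{A^{-1}}$. Expanding both $\|x-y\|^2$ and $\|R_Ax-R_Ay\|^2$ with the parallelogram identity yields the key single-operator identity
\begin{equation}
\|x-y\|^2 - \|R_Ax-R_Ay\|^2 = 4\scal{J_Ax-J_Ay}{J_{A^{-1}}x-J_{A^{-1}}y}.
\end{equation}
Applying the same identity to $B$ at the points $R_Ax,R_Ay$ and adding the two telescopes to
\begin{equation}
\label{eq:proposal:telescope}
\|x-y\|^2 - \|R_BR_Ax-R_BR_Ay\|^2 = 4\scal{J_Ax-J_Ay}{J_{A^{-1}}x-J_{A^{-1}}y} + 4\scal{J_BR_Ax-J_BR_Ay}{J_{B^{-1}}R_Ax-J_{B^{-1}}R_Ay}.
\end{equation}

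Next, I would use the convex-combination identity $\|(1-\lambda)a+\lambda b\|^2 = (1-\lambda)\|a\|^2 + \lambda\|b\|^2 - \lambda(1-\lambda)\|a-b\|^2$ with $a=x-y$ and $b=R_BR_Ax-R_BR_Ay$. Since $\Id - T_\lambda = \lambda(\Id - R_BR_A)$ by \cref{lem:static:fix:zer}\cref{lem:static:fix:zer:0}, the cross term satisfies $\|(x-y)-(R_BR_Ax-R_BR_Ay)\|^2 = \lambda^{-2}\|(\Id-T_\lambda)x-(\Id-T_\lambda)y\|^2$. Rearranging gives
\begin{equation}
\lambda\bigl(\|x-y\|^2 - \|R_BR_Ax-R_BR_Ay\|^2\bigr) = \|x-y\|^2 - \|T_\lambda x-T_\lambda y\|^2 - \tfrac{1-\lambda}{\lambda}\|(\Id-T_\lambda)x-(\Id-T_\lambda)y\|^2.
\end{equation}
Multiplying by $\lambda$ and substituting \cref{eq:proposal:telescope} on the left-hand side produces exactly \cref{e:general:ineq}.

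Finally, for \cref{lem:pre:tele:i} and \cref{lem:pre:tele:ii}, I would invoke \cref{thm:minty}: since $A$ is monotone, $(J_Ax,J_{A^{-1}}x)$ and $(J_Ay,J_{A^{-1}}y)$ both lie in $\gra A$, so the first inner product on the right of \cref{e:general:ineq} is nonnegative; the same reasoning applied to $B$ at $R_Ax, R_Ay$ handles the second. Dropping the (nonnegative, since $\lambda\le 1$) term $(1-\lambda)\|(\Id-T_\lambda)x-(\Id-T_\lambda)y\|^2$ and dividing by $\lambda$ yields \cref{lem:pre:tele:i}; dropping instead the first inner-product term yields \cref{lem:pre:tele:ii}. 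I do not anticipate a real obstacle here; the only point that requires care is tracking the factor $\lambda$ versus $\lambda^2$ when translating between $T_\lambda$-displacements and $R_BR_A$-displacements via \cref{lem:static:fix:zer}\cref{lem:static:fix:zer:0}.
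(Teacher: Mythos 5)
Your proof is correct and takes essentially the same route as the paper: both reduce the left-hand side to $\lambda^2\bigl(\|x-y\|^2-\|R_BR_Ax-R_BR_Ay\|^2\bigr)$ using $\Id-T_\lambda=\lambda(\Id-R_BR_A)$, telescope that quantity through $R_A$, convert $\Id\pm R_A$ and $\Id\pm R_B$ into resolvent inner products via the Minty parametrization $\Id=J_A+J_{A^{-1}}$, and then obtain \cref{lem:pre:tele:i}--\cref{lem:pre:tele:ii} by discarding the nonnegative $(1-\lambda)\|\cdot\|^2$ term and the appropriate nonnegative inner product. The only cosmetic difference is that the paper carries out the initial $\lambda$-reduction by a chain of difference-of-squares factorizations, whereas you invoke the convex-combination norm identity $\|(1-\lambda)a+\lambda b\|^2=(1-\lambda)\|a\|^2+\lambda\|b\|^2-\lambda(1-\lambda)\|a-b\|^2$; these are the same algebra, and your modular organization makes the structure a bit more transparent.
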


\begin{proof}
Indeed, observe that
\begin{equation}
    \label{eq:Tlamda}
    T_\lambda  
    =(1-2\lambda)\Id +\lambda (\Id+R_BR_A)
\end{equation}
and 
\begin{equation}
    \label{eq:Tlamda:disp}
    \Id-T_\lambda  
    =\lambda(\Id-R_BR_A).
\end{equation}
In view of \cref{eq:Tlamda} and \cref{eq:Tlamda:disp} 
we have 
\begin{subequations}
    \begin{align}
        &\quad
        \lambda \norm{x-y}^2 -\lambda \norm{T_\lambda x-T_\lambda y}^2-(1-\lambda) \norm{(\Id-T_\lambda)x-(\Id-T_\lambda)y}^2
        \\
        &=
        \scal{\lambda((x-y)+(T_\lambda  x-T_\lambda  y))}{(x-y)-(T_\lambda  x-T_\lambda  y)}-(1-\lambda) \norm{(\Id-T_\lambda  )x-(\Id-T_\lambda  )y}^2
        \\
        &=
        \scal{\lambda((x-y)+(T_\lambda  x-T_\lambda  y))-(1-\lambda)((x-y)-(T_\lambda  x-T_\lambda  y))}{(x-y)-(T_\lambda  x-T_\lambda  y)}
        \\
        &=\scal{T_\lambda  x-T_\lambda  y-(1-2\lambda)(x-y)}{(x-y)-(T_\lambda  x-T_\lambda  y)}
        \\
        &  = \lambda^2\scal{(x-y)+(R_BR_Ax-R_BR_Ay)}{(x-y)-(R_BR_Ax-R_BR_Ay)}
        \\
        &=\lambda^2\big(\norm{x-y}^2-\norm{R_BR_Ax-R_BR_Ay}^2\big)
        \\
        &=\lambda^2\big(\norm{x-y}^2-\norm{R_Ax-R_Ay}^2
        +\norm{R_Ax-R_Ay}^2-\norm{R_BR_Ax-R_BR_Ay}^2\big)
        \\
        &=\lambda^2\scal{(\Id+R_A)x-(\Id+R_A)y}{(\Id-R_A)x-(\Id-R_A)y}
        \nonumber
        \\
        &\quad+\lambda^2\scal{(\Id+R_B)R_Ax-(\Id+R_B)R_Ay}{
            (\Id-R_B)R_Ax-(\Id-R_B)R_Ay}
        \\
        &=4\lambda^2\scal{J_Ax-J_Ay}{J_{A^{-1}}x-J_{A^{-1}}y}
        +4\lambda^2\scal{J_BR_Ax-J_BR_Ay}{J_{B^{-1}}R_Ax-J_{B^{-1}}R_Ay}.
    \end{align}
\end{subequations}
This proves \cref{e:general:ineq}.

\cref{lem:pre:tele:i}\&\cref{lem:pre:tele:ii}:
Observe that the monotonicity of $A$ (respectively $B$) and the Minty parametrization (see \cref{thm:minty})
of $\gra A$ (respectively $\gra B$) imply that 
$\scal{J_Ax-J_Ay}{J_{A^{-1}}x-J_{A^{-1}}y}\ge 0$
(respectively $
\scal{J_BR_Ax-J_BR_Ay}{J_{B^{-1}}R_Ax-J_{B^{-1}}R_Ay}\ge 0$).
Now combine with  \cref{e:general:ineq}.
\end{proof}	

\begin{corollary}	
\label{cor:limit}
Let $\lambda\in \left]0,1\right]$ and set
\begin{equation}
    T_\lambda =(1-\lambda) \Id +\lambda R_BR_A.
\end{equation}	
Let $(x,y)\in X\times X$.
Then the following hold:
\begin{enumerate}
    \item
    \label{cor:limit:i}
    $\scal{J_A T_\lambda ^nx-J_AT_\lambda ^ny}{J_{A^{-1}}T_\lambda ^nx-J_{A^{-1}}T_\lambda ^ny}\to 0$.
    \item
    \label{cor:limit:ii}
    $\scal{J_BR_AT_\lambda ^nx-J_BR_AT_\lambda ^ny}{J_{B^{-1}}R_AT_\lambda ^nx-J_{B^{-1}}R_AT_\lambda ^ny}\to 0$.
\end{enumerate}
\end{corollary}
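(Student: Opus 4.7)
The plan is to exploit \cref{lem:pre:tele} as a telescoping estimate, which is the natural thing to do given that both inequalities there are of the form \emph{``drop in norm of differences of iterates $\ge$ nonnegative inner product we want to control.''} Since $\lambda\in\left]0,1\right]$, the operator $T_\lambda=(1-\lambda)\Id+\lambda R_BR_A$ is a convex combination of $\Id$ and the (nonexpansive) composition $R_BR_A$, hence nonexpansive; in particular the sequence $\bigl(\|T_\lambda^n x - T_\lambda^n y\|\bigr)_{\nnn}$ is nonincreasing and bounded below by $0$, so it converges to some finite limit.

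First, I would apply \cref{lem:pre:tele}\cref{lem:pre:tele:i} with $(x,y)$ replaced by $(T_\lambda^n x, T_\lambda^n y)$ to obtain, for every $\nnn$,
\begin{equation*}
\|T_\lambda^n x - T_\lambda^n y\|^2 - \|T_\lambda^{n+1} x - T_\lambda^{n+1} y\|^2
\;\ge\; 4\lambda\,\scal{J_A T_\lambda^n x-J_A T_\lambda^n y}{J_{A^{-1}}T_\lambda^n x-J_{A^{-1}}T_\lambda^n y}\;\ge\;0.
\end{equation*}
Summing from $n=0$ to $n=N$, the left-hand side telescopes to $\|x-y\|^2-\|T_\lambda^{N+1}x-T_\lambda^{N+1}y\|^2$, which is bounded above by $\|x-y\|^2<\infty$ uniformly in $N$. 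Letting $N\to\infty$ yields that the series
$\sum_{\nnn}\scal{J_A T_\lambda^n x-J_A T_\lambda^n y}{J_{A^{-1}}T_\lambda^n x-J_{A^{-1}}T_\lambda^n y}$
converges, and since its terms are nonnegative (by Minty's \cref{thm:minty} and the monotonicity of $A$, exactly as used inside the proof of \cref{lem:pre:tele}), the general term must tend to $0$. This establishes \cref{cor:limit:i}.

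For \cref{cor:limit:ii}, I would repeat the identical argument, this time starting from \cref{lem:pre:tele}\cref{lem:pre:tele:ii}, which provides the analogous lower bound
\begin{equation*}
\|T_\lambda^n x - T_\lambda^n y\|^2 - \|T_\lambda^{n+1} x - T_\lambda^{n+1} y\|^2\;\ge\;4\lambda\,\scal{J_BR_A T_\lambda^n x-J_BR_A T_\lambda^n y}{J_{B^{-1}}R_A T_\lambda^n x-J_{B^{-1}}R_A T_\lambda^n y},
\end{equation*}
with the right-hand side nonnegative by the monotonicity of $B$ together with \cref{thm:minty}. The same telescoping/summability step then forces the general term on the right to tend to $0$.

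There is no real obstacle; the only thing to be mindful of is that $\lambda=1$ is allowed, so one cannot appeal to averagedness of $T_\lambda$ or to \cref{e:v:limit}: the argument must rely only on nonexpansiveness of $T_\lambda$, which is precisely what the telescoping sum uses. The nonnegativity of each summand—which is the other essential ingredient—follows from Minty's parametrization exactly as in the proof of \cref{lem:pre:tele}.
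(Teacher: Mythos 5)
Your proof is correct and follows essentially the same route as the paper: apply \cref{lem:pre:tele}\cref{lem:pre:tele:i} (resp.\ \cref{lem:pre:tele:ii}) along the iterates, telescope to conclude the nonnegative series converges, and invoke Minty's parametrization together with monotonicity of $A$ (resp.\ $B$) for the nonnegativity of the terms. Your added remark that the argument must (and does) rely only on nonexpansiveness of $T_\lambda$ rather than averagedness, since $\lambda=1$ is permitted, is a correct and worthwhile observation.
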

\begin{proof}
\cref{cor:limit:i}:
It follows from \cref{lem:pre:tele}\cref{lem:pre:tele:i}
that $(\forall \nnn)$
\begin{equation}
    \norm{T_\lambda ^nx-T_\lambda ^ny}^2 
    - \norm{T_\lambda^{n+1}x-T_\lambda^{n+1}y}^2
    \ge 4\lambda\scal{J_AT_\lambda ^nx-J_AT_\lambda ^ny}{J_{A^{-1}}
        T_\lambda ^nx-J_{A^{-1}}T_\lambda ^ny}.
\end{equation}
Telescoping yields
\begin{equation} 
    \sum_{n=0}^\infty
    \scal{J_AT_\lambda^nx-J_AT_\lambda^ny}{J_{A^{-1}}T_\lambda^nx-J_{A^{-1}}T_\lambda^ny}<+\infty,
\end{equation}
and the conclusion follows using the monotonicity of 
$A$ in view of Minty's parametrization \cref{thm:minty}.
\cref{cor:limit:ii}:
Proceed similar to the proof of \cref{cor:limit:i}
using  \cref{lem:pre:tele}\cref{lem:pre:tele:ii}
and the monotonicity of $B$ in view of \cref{thm:minty}.
\end{proof}	

\section{Dynamic consequences}
\label{sec:4}
Upholding the notation of \cref{section:3}
we recall that \cref{lem:zeros} implies that 
\begin{empheq}[box=\mybluebox]{equation}
\label{e:assump:z}
\text{$Z\neq \fady\siff \fix(\gap+T)\neq \fady\siff\gap\in \ran(\Id-T)$.}
\end{empheq}

\begin{lemma}
\label{lem:gr:A:unif}
Suppose that $A\colon X\rras X$ is uniformly monotone
with modulus $\phi$.
Let $(x,y)\in X\times X$. Then
\begin{equation}
    \scal{J_Ax-J_Ay}{J_{A^{-1}}x-J_{A^{-1}}y}\ge \phi({\norm{J_A x-J_Ay}}).
\end{equation}	
\end{lemma}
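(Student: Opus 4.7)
The plan is quite short: this is a direct application of the definition of uniform monotonicity combined with the Minty parametrization already recalled as \cref{thm:minty}.

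First I would invoke \cref{thm:minty}: since $A$ is maximally monotone, $\ran(\Id+A)=X$, so for every $x\in X$ the pair $(J_A x, J_{A^{-1}} x)$ lies in $\gra A$. In particular, both $(J_A x, J_{A^{-1}} x)$ and $(J_A y, J_{A^{-1}} y)$ belong to $\gra A$.

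Next I would apply the definition of uniform monotonicity of $A$ with modulus $\phi$ to these two graph points: by definition, for every $(u,u^*),(v,v^*)\in \gra A$ one has $\scal{u-v}{u^*-v^*}\ge \phi(\norm{u-v})$. Substituting $(u,u^*)=(J_A x, J_{A^{-1}} x)$ and $(v,v^*)=(J_A y, J_{A^{-1}} y)$ immediately yields
\begin{equation*}
\scal{J_A x-J_A y}{J_{A^{-1}}x-J_{A^{-1}}y}\ge \phi(\norm{J_A x-J_A y}),
\end{equation*}
which is the desired inequality.

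There is no real obstacle here; the only subtlety is making sure the reader sees that Minty's parametrization is what licenses the substitution, since the hypothesis is stated on $\gra A$ whereas the conclusion is phrased in terms of the resolvents $J_A$ and $J_{A^{-1}}$. The lemma will then be used in the sequel exactly as \cref{lem:pre:tele}\cref{lem:pre:tele:i} is used, to upgrade the telescoping bound of \cref{cor:limit}\cref{cor:limit:i} from ``inner product tends to zero'' into ``$\phi(\norm{J_A T_\lambda^n x-J_A T_\lambda^n y})\to 0$'', and hence to convergence of the shadow sequence.
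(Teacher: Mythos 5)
Your proposal is exactly the paper's argument, just spelled out: the paper's proof is the single line ``This is a direct consequence of Minty's theorem \cref{thm:minty},'' and you have correctly unpacked it by using the Minty parametrization of $\gra A$ (licensed by the standing maximal monotonicity of $A$, which gives $\ran(\Id+A)=X$) together with the definition of uniform monotonicity. No discrepancy.
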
	
\begin{proof}
This is a direct consequence of
Minty's theorem
\cref{thm:minty}.	
\end{proof}	

Let $C\colon X\rras X$ be uniformly monotone
with modulus $\phi$
and suppose that $\zer C\neq \fady$.
Then $C$ is strictly monotone and it follows from,
e.g., \cite[Proposition~23.35]{BC2017} that 
\begin{equation}
\label{eq:zer:uniq}
\text{$\zer C$ is a singleton}.
\end{equation}	
Moreover, it follows from \cite[Example~25.15(iii)]{BC2017}
that 
\begin{equation}
\label{eq:unifmono:3*}
\text{$\phi$ is supercoercive $\RA$ $C$ is $3^*$ monotone}.
\end{equation}	

We are now ready for the main results of this section.
\begin{theorem}[{\bf convergence of Douglas--Rachford algorithm}]
\label{thm:DR:convergence}
Let $\lambda\in \left]0,1\right[$ and set
\begin{equation}
    T_\lambda =(1-\lambda) \Id +\lambda R_BR_A.
\end{equation}	
Suppose that $Z\neq \fady $ and that  $(\exists C\in \{A,B\})$
such that $C$ is uniformly monotone with a supercoercive modulus.
Then $(\exists \overline{x}\in X )$
such that $Z=\{\overline{x}\}$.
Let $x\in X$. 
Then the following hold:
\begin{enumerate}	
    \item
    \label{thm:DR:convergence:i}	
    $J_AT_\lambda^n x\to \overline{x}$.
    \item
    \label{thm:DR:convergence:ii}
    $J_BR_AT_\lambda^n x\to \overline{x}-\gap$.
\end{enumerate}	
\end{theorem}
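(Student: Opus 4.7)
The plan is to combine the summability provided by Corollary~\ref{cor:limit} with the uniform monotonicity modulus from Lemma~\ref{lem:gr:A:unif}, using a fixed reference point in $\fix(\gap+T)$ as an anchor whose images under $J_A$ and $J_BR_A$ stay constant along the orbit.

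First I would verify uniqueness of the generalized solution. Since $Z=\zer(-\gap+A+B(\cdot-\gap))$ by \cref{def:gen:Z}, and the translation $B(\cdot-\gap)$ preserves the monotonicity modulus of $B$ (while $A$ is unchanged by the shift $-\gap$), the operator $-\gap+A+B(\cdot-\gap)$ is uniformly monotone whenever one of $A,B$ is. Combined with the hypothesis $Z\neq\fady$, the uniqueness statement \cref{eq:zer:uniq} then yields $Z=\{\overline{x}\}$ for some $\overline{x}\in X$. Next, by \cref{e:assump:z} we may pick a reference point $y\in\fix(\gap+T)$, and \cref{fact:fix:p}\cref{fact:fix:p:ii}\&\cref{fact:fix:p:iii} give the crucial identities
\begin{equation}
J_AT_\lambda^n y=\overline{x}\quad\text{and}\quad J_BR_AT_\lambda^n y=\overline{x}-\gap \qquad (\forall\,\nnn).
\end{equation}

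Now I would split into the two cases. Suppose first that $A$ is uniformly monotone with modulus $\phi$. By \cref{cor:limit}\cref{cor:limit:i} the inner products $\scal{J_AT_\lambda^n x-J_AT_\lambda^n y}{J_{A^{-1}}T_\lambda^n x-J_{A^{-1}}T_\lambda^n y}$ tend to $0$, and \cref{lem:gr:A:unif} then forces $\phi(\|J_AT_\lambda^n x-\overline{x}\|)\to 0$. Because $T_\lambda$ is nonexpansive (averaged) and $J_A$ is nonexpansive, the sequence $(\|J_AT_\lambda^n x-\overline{x}\|)_\nnn$ is bounded by $\|x-y\|$. Since $\phi$ is increasing with $\phi(t)=0\iff t=0$, any bounded nonnegative sequence $(t_n)$ with $\phi(t_n)\to 0$ must itself tend to $0$ (a subsequence bounded away from $0$ would yield a positive limit of $\phi(t_n)$ by monotonicity of $\phi$). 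Hence $J_AT_\lambda^n x\to\overline{x}$, which is \cref{thm:DR:convergence:i}, and \cref{thm:DR:convergence:ii} follows by subtracting using \cref{fact:fix:p}\cref{fact:fix:p:0}, namely $J_AT_\lambda^n x-J_BR_AT_\lambda^n x\to \gap$.

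The case where $B$ is uniformly monotone is symmetric: I would apply \cref{cor:limit}\cref{cor:limit:ii} together with \cref{lem:gr:A:unif} for $B$ to conclude $\|J_BR_AT_\lambda^n x-(\overline{x}-\gap)\|\to 0$ (using boundedness of $(R_AT_\lambda^n x-R_AT_\lambda^n y)_\nnn$ via nonexpansiveness of $R_A$ and $T_\lambda$), which directly gives \cref{thm:DR:convergence:ii}; then \cref{thm:DR:convergence:i} again comes from \cref{fact:fix:p}\cref{fact:fix:p:0}. The only mildly delicate point in the whole argument is the passage from $\phi(t_n)\to 0$ to $t_n\to 0$, which requires only the boundedness of $(t_n)$ (granted by nonexpansiveness) and the standard monotonicity/vanishing properties of the modulus of uniform monotonicity; everything else is a direct application of the earlier lemmas.
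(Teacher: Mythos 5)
Your proposal is correct and follows essentially the same route as the paper: uniqueness of $Z$ via uniform monotonicity of $-\gap+A+B(\cdot-\gap)$ together with \cref{eq:zer:uniq} and \cref{e:assump:z}, a reference point $y\in\fix(\gap+T)$ anchored by \cref{fact:fix:p}\cref{fact:fix:p:ii}\&\cref{fact:fix:p:iii}, and then \cref{cor:limit} combined with \cref{lem:gr:A:unif} to drive $\phi(\|J_AT_\lambda^nx-\overline{x}\|)\to 0$, finishing with \cref{fact:fix:p}\cref{fact:fix:p:0}. The only difference is that you explicitly justify the passage from $\phi(t_n)\to 0$ to $t_n\to 0$; that step is indeed worth spelling out, although boundedness of $(t_n)$ is not actually needed — since a modulus of uniform monotonicity is increasing and vanishes only at $0$, the existence of a subsequence with $t_{n_k}\ge\varepsilon>0$ already forces $\phi(t_{n_k})\ge\phi(\varepsilon)>0$, contradicting $\phi(t_n)\to 0$, regardless of whether $(t_n)$ is bounded.
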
	

\begin{proof}
To lighten the notation, throughout the proof we set $T=T_\lambda$.  
By assumption it is straightforward to verify that
 $(\exists \overline{C}\in \{-\gap+A,B(\cdot+\gap)\})$
such that $\overline{C}$ is uniformly monotone
and $3^*$ monotone (see \cref{eq:unifmono:3*}).
Hence,
$-\gap+A+B(\cdot+\gap)$ is uniformly monotone and
therefore $Z$ is  a singleton by \cref{eq:zer:uniq}
applied with $C$ replaced by $-\gap+A+B(\cdot+\gap)$.
Now combine with \cref{e:assump:z} to learn that
$\fix(\gap+T)\neq \fady$.
\cref{thm:DR:convergence:i}\&\cref{thm:DR:convergence:ii}:
Indeed, let $y\in \fix(\gap+T)$ and observe that
\cref{fact:fix:p}\cref{fact:fix:p:ii} implies that 
$(\forall \nnn)$ $J_A T^n y=\overline{x}$.
First suppose that $C=A$.
Combining \cref{cor:limit}\cref{cor:limit:i} and \cref{lem:gr:A:unif} applied with 
$(x,y)$ replaced by $(T^n x, T^ny)$ 
we learn that $\phi(\norm{J_AT^n x-\overline{x}})
=\phi(\norm{J_AT^n x-J_AT^ny})\to 0$.
Hence $J_AT^n x\to \overline{x}$. Now combine with 
\cref{fact:fix:p}\cref{fact:fix:p:0} 
to prove \cref{thm:DR:convergence:ii}.
For the case $C=B$, proceed similar to above but use 
\cref{fact:fix:p}\cref{fact:fix:p:iii} and 
\cref{cor:limit}\cref{cor:limit:ii} instead. 
\end{proof}

\begin{theorem}[{\bf convergence of Peaceman--Rachford algorithm}]
\label{thm:PR:convergence}
Set
\begin{equation}
    \widetilde{T}=R_BR_A.
\end{equation}	
Suppose that $Z\neq \fady $ and that  $A$ is uniformly monotone
with supercoercive modulus.
Then $(\exists \overline{x}\in X )$
such that $Z=\{\overline{x}\}$.
Let $x\in X$. 
Then 
$J_A\widetilde{T}^n x\to \overline{x}$.
\end{theorem}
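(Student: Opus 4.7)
The plan is to mimic the structure of the proof of \cref{thm:DR:convergence}, but carefully adapted to the fact that $\widetilde{T}=T_1=R_BR_A$ is only nonexpansive rather than averaged, so \cref{fact:fix:p} is unavailable (it requires $\lambda\in\left]0,1\right[$). Nevertheless, \cref{lem:pre:tele}, \cref{cor:limit}, and \cref{lem:static:fix:zer} are all stated for $\lambda\in\left]0,1\right]$, which is exactly what we need.

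First I would establish that $Z$ is a singleton. Since $A$ is uniformly monotone with some modulus $\phi$, so is $-\gap+A$, and therefore so is $-\gap+A+B(\cdot-\gap)$ (as the sum of a uniformly monotone and a monotone operator). Combined with the hypothesis $Z\neq\fady$ and \cref{eq:zer:uniq}, this forces $Z=\{\overline{x}\}$ for some $\overline{x}\in X$. By \cref{e:assump:z}, we also have $\fix(\gap+T)\neq\fady$.

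Next I would construct a ``reference orbit.'' Pick any $y\in\fix(\gap+T)$. By \cref{lem:static:fix:zer}\cref{lem:static:fix:zer:i}\&\cref{lem:static:fix:zer:ii} applied with $\lambda=1$, we have $\fix(2\gap+\widetilde{T})=\fix(\gap+T)$ and $\gap_1=2\gap=P_{\cran(\Id-\widetilde{T})}(0)$. Applying \cref{lem:fix:p} to the nonexpansive mapping $\widetilde{T}$ with $y\in\fix(2\gap+\widetilde{T})$ gives
\begin{equation}
\widetilde{T}^n y=y-2n\gap.
\end{equation}
A direct verification (using $\widetilde{T}y=y-2\gap$ and nonexpansiveness of the shift) shows $y-2n\gap\in\fix(\gap+T)$ for every $\nnn$. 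Consequently, by \cref{lem:zeros},
\begin{equation}
J_A\widetilde{T}^n y=J_A(y-2n\gap)\in J_A(\fix(\gap+T))=Z=\{\overline{x}\},
\end{equation}
so $J_A\widetilde{T}^n y=\overline{x}$ for all $\nnn$.

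Finally, I would exploit uniform monotonicity of $A$. By \cref{cor:limit}\cref{cor:limit:i} with $\lambda=1$,
\begin{equation}
\scal{J_A\widetilde{T}^n x-\overline{x}}{J_{A^{-1}}\widetilde{T}^n x-J_{A^{-1}}\widetilde{T}^n y}\to 0.
\end{equation}
On the other hand, \cref{lem:gr:A:unif} applied to the pair $(\widetilde{T}^n x,\widetilde{T}^n y)$ yields
\begin{equation}
\phi(\|J_A\widetilde{T}^n x-\overline{x}\|)\le \scal{J_A\widetilde{T}^n x-\overline{x}}{J_{A^{-1}}\widetilde{T}^n x-J_{A^{-1}}\widetilde{T}^n y}.
\end{equation}
Since $\phi$ vanishes only at $0$ and is increasing near $0$, the squeeze forces $\|J_A\widetilde{T}^n x-\overline{x}\|\to 0$, completing the proof. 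The main obstacle is really just the bookkeeping required to replace \cref{fact:fix:p} (which needs $\lambda<1$) with a direct application of \cref{lem:fix:p} to $\widetilde{T}$; note also that, unlike the Douglas--Rachford case, we cannot conclude $J_BR_A\widetilde{T}^n x\to\overline{x}-\gap$ here because $\widetilde{T}$ is not averaged and therefore the displacement limit \cref{e:v:limit} fails for $\widetilde{T}$.
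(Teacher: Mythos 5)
Your proposal is correct and follows essentially the same strategy as the paper: establish $Z=\{\overline{x}\}$ via uniform monotonicity and \cref{eq:zer:uniq}, build a reference orbit $\widetilde{T}^n y$ with $J_A\widetilde{T}^n y\equiv\overline{x}$, then combine \cref{cor:limit}\ref{cor:limit:i} (at $\lambda=1$) with \cref{lem:gr:A:unif}. The only cosmetic difference is in justifying $J_A\widetilde{T}^n y=\overline{x}$: the paper leans on \cref{fact:fix:p}\ref{fact:fix:p:ii} (implicitly using $\widetilde{T}^n y = T^{2n}y = y - 2n\gap$ at $\lambda=\tfrac12$), while you apply \cref{lem:fix:p} and \cref{lem:static:fix:zer} directly to the nonexpansive $\widetilde{T}$ — a slightly more explicit bookkeeping that correctly sidesteps the $\lambda\in\left]0,1\right[$ restriction on \cref{fact:fix:p}.
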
	

\begin{proof}
Observe that, likewise $A$, $-\gap+A$
is uniformly monotone and $3^*$ monotone (see \cref{eq:unifmono:3*}).
Hence,
$-\gap+A+B(\cdot+\gap)$ is uniformly monotone and
therefore  $Z$ is  a singleton by applying \cref{eq:zer:uniq}
with $C$ replaced by $-\gap+A+B(\cdot+\gap)$.
Now combine with \cref{e:assump:z} to learn that
$\fix(\gap+T)\neq \fady$.
Let $y\in \fix(\gap+T)$ and observe that \cref{fact:fix:p}\cref{fact:fix:p:ii}  implies
that $(\forall \nnn)$
$J_AT^ny=\overline{x}$.
Therefore \cref{cor:limit}\cref{cor:limit:i} 
implies  that
\begin{equation}
    \label{eq:PR:lim}
    \scal{J_A \widetilde{T}^nx-\overline{x}}{
        J_{A^{-1}}\widetilde{T}^nx-J_{A^{-1}}\widetilde{T}^ny}\to 0.
\end{equation}	
Combining \cref{eq:PR:lim} with \cref{lem:gr:A:unif}	
applied with $x$ replaced by $\widetilde{T}^n x $ yields
$\phi(	\norm{J_A \widetilde{T}^nx-\overline{x}}) \to 0$,			
and the conclusion follows.
\end{proof}

Before proceeding to the illustrative example in this section
we recall the following fact.
\begin{fact}
\label{fact:ran:C}
Suppose that $(\exists C\in \{A,B\})$
such that $C$ is $3^*$ monotone and 
surjective. Then 
\begin{equation}
    \cran(\Id-T)=\overline{(\dom A-\dom B)
        \cap  (\ran A+\ran B)}=\overline{\dom A-\dom B}.
\end{equation}
\end{fact}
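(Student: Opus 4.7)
The plan is to derive both equalities as an essentially immediate consequence of the standing assumption \cref{eq:A:ran:T}---which, in the finite-dimensional setting, is guaranteed by Theorem~5.2 of \cite{MOR} under any one of the three conditions listed in the remark following \cref{eq:A:ran:T}, including the present hypothesis that some $C\in\{A,B\}$ is $3^*$ monotone and surjective---together with a set-theoretic simplification driven by surjectivity.

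First I would invoke \cref{eq:A:ran:T} to record the representation
\[
\cran(\Id-T)=\overline{(\dom A-\dom B)\cap(\ran A+\ran B)}.
\]
Next, surjectivity of $C\in\{A,B\}$ gives $\ran C=X$, and hence, regardless of whether $C=A$ or $C=B$,
\[
\ran A+\ran B=\ran A-\ran B=X.
\]
Consequently the intersection with $\dom A-\dom B$ reduces to $\dom A-\dom B$ itself, and taking closures yields both equalities of the statement in one stroke.

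The only point requiring any attention is reconciling the $\ran A+\ran B$ appearing in \cref{eq:A:ran:T} with the $\ran A-\ran B$ appearing in the stated formula, but surjectivity collapses both combinations to $X$ and the discrepancy evaporates. There is therefore no substantial obstacle: the argument is little more than bookkeeping once the general representation \cref{eq:A:ran:T} is available, and the role of the $3^*$ monotonicity of $C$ is entirely absorbed into the justification of that representation via \cite[Theorem~5.2]{MOR}.
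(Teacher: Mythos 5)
Your proof is correct and essentially matches the paper's, whose entire argument is the bare citation \cite[Theorem~5.2]{MOR}; you recover the same conclusion by combining the standing identity \cref{eq:A:ran:T} (itself justified under the present hypotheses by that same theorem, per the remark following \cref{eq:A:ran:T}) with the observation that surjectivity of $C\in\{A,B\}$ forces $\ran A+\ran B=\ran A-\ran B=X$, so both intersections collapse to $\dom A-\dom B$. The explicit reconciliation you flag between the sum $\ran A+\ran B$ in \cref{eq:A:ran:T} and the difference $\ran A-\ran B$ in the statement is a fair point of care, and it dissolves under surjectivity exactly as you note.
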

\begin{proof}
See \cite[Theorem~2.5]{WMM2024}
and also \cite[Theorem~5.2]{MOR}.
\end{proof}
\begin{example}
\label{ex:two:balls}
Suppose that $X=\RR^2$,
let $\theta\in \left[0,\tfrac{\pi}{2}\right[$,
let $(\beta,\gamma )\in \RR^2$,
let $r>0$,
set $b=(0,\beta)$,
set $c=(\gamma,0)$,
set 
\begin{equation}
    \rot=\rot_\theta=	\begin{pmatrix}
        \cos(\theta) &-\sin (\theta) 
        \\
        \sin (\theta) &	\cos(\theta)
    \end{pmatrix},
\end{equation}
set $A=\rot+N_{\ball{0}{1}}$
and set $B=b+N_{\ball{c}{r}}$.
Let $x\in X$, let $\lambda\in \left]0,1\right]$
and set 
\begin{equation}
    T_\lambda=(1-\lambda)\Id +\lambda R_BR_A
    =\Id-2\lambda J_A+2\lambda J_B(2J_A-\Id).
\end{equation}	
Then 
\begin{equation}
    \label{ex:JA}
    J_A	x=
    \begin{cases}
        \tfrac{1}{2(1+\cos(\theta)) }(\Id+\rot_{-\theta})x,
        &\norm{x}^2\le 2(1+\cos\theta);
        \\
        \tfrac{1}{\norm{x}^2}\Big(\Big(\sqrt{\norm{x}^2
            - \sin^2(\theta) }-\cos(\theta)\Big)\Id+\rot_{-\theta}\Big)x,
        &\text{otherwise},
    \end{cases}
\end{equation}	
\begin{equation}
    \label{ex:JB}
    J_Bx=P_{\ball{c}{r}}(x-b)	
    =\begin{cases}
        x-b,
        &\norm{x-(b+c)}\le r;
        \\
        c+r\frac{x-(b+c)}{\norm{x-(b+c)}},
        &\text{otherwise}.
    \end{cases}
\end{equation}

Moreover, we have:
\begin{enumerate}
    \item 
    \label{ex:two:balls:i}
    $A$ is maximally monotone and
    strongly monotone, hence $A$ is uniformly monotone.
    \item
    \label{ex:two:balls:ii}
    $B$ is maximally monotone.
    \item
    \label{ex:two:balls:iii}
    $\cran(\Id-T)=\overline{\dom A-\dom B}={\dom A-\dom B}=\ball{-c}{r+1}$.
    \item
    \label{ex:two:balls:iv}
    $\gap=(\sign (\gamma)\min\{0,r+1-\abs{\gamma}\},0)$.
    \item
    \label{ex:two:balls:v}
    Exactly one of the following holds:
    \begin{enumerate}
        \item 
        \label{ex:two:balls:v:a}
        $\abs{\gamma}<(r+1)$ and $A+B$
        is maximally monotone,
        in which case $\gap= (0,0)\in \ran (\Id-T)$ 
        and $Z$ is a singleton. 
        \item
        \label{ex:two:balls:v:b}
        $\abs{\gamma}\ge (r+1)$ and 
        $\gap= (\sign (\gamma)(r+1)-{\gamma},0)$, in which case\\[1mm]
        $\left[Z
        \neq \fady\siff Z=\{\sign(\gamma)(1,0)\}
        \siff
        \beta=-\sign(\gamma)\sin \theta \right]$.
        
    \end{enumerate}	
    \item
    \label{ex:two:balls:vi}
    Suppose that $\beta=-\sign(\gamma)\sin \theta$.
    Then $Z=\{\sign(\gamma)(1,0)\}$
    and the following hold:
    \begin{enumerate}
        \item 
        \label{ex:two:balls:vi:c}
        $J_AT_\lambda^n x\to \sign(\gamma)(1,0)$.
        \item
        \label{ex:two:balls:vi:d}
        Suppose that $\lambda\in \left]0,1\right[$. Then 
        $J_BR_AT_\lambda^n x\to  (\gamma-\sign(\gamma)r,0)$.
    \end{enumerate}	
    
    
\end{enumerate}

\end{example}	
\begin{proof}
The formula in  \cref{ex:JA}	
follows from applying \cref{ex:Id:S} with $(\alpha,\beta,\gamma)$
replaced by $(\cos\theta,\sin \theta,1)$
and $S$ replaced by 
$\begin{psmallmatrix}
    0&-1\\
    1&0
\end{psmallmatrix}$.
The first identity in 
\cref{ex:JB} follows from 
\cite[Proposition~23.17(ii)]{BC2017}
whereas the second identity follows from , e.g., 
\cite[Proposition~29.10]{BC2017}.

\cref{ex:two:balls:i}:	
It is straightforward to verify that 
$A$ is $\cos \theta$-strongly monotone.
The maximal monotonicity of 
$A$ follows from, e.g., \cite[Corollary~25.5(i)]{BC2017}.

\cref{ex:two:balls:ii}:	
This is clear by observing that
$B=\partial(\iota_{\ball{c}{r}}+\scal{b}{\cdot})$.

\cref{ex:two:balls:iii}:	
On the one hand, we have 
$\dom A=\ball{0}{1}$
and $\dom B=\ball{c}{r}$.
Therefore $\ran A=\ran B=\RR^2$ by, e.g., 
\cite[Corollary~21.15]{BC2017}.
On the other hand, 
combining \cref{ex:two:balls:i}
and \cite[Example~25.15(iv)]{BC2017}
we learn that
$A$ is $3^*$ monotone. 
Moreover, $B$ is $3^*$ monotone
by, e.g., \cite[Example~25.14]{BC2017}. 
Now, apply \cref{fact:ran:C} with $C$
replaced by either $A$ or $B$
to learn that 
$\cran(\Id-T)=\ball{0}{1}-\ball{c}{r}=\ball{-c}{r+1}$.

\cref{ex:two:balls:iv}:	
Observe that $-c=(-\gamma,0)$.
If $\abs{\gamma}<r+1$ then $(0,0)\in \ball{-c}{r+1}$
and hence $\gap=(0,0)$.
Else, if $ \abs{\gamma}\ge r+1$ then $\gap=(\sign(\gamma)(r+1)-\gamma,0)$
and the conclusion follows.

\cref{ex:two:balls:v:a}:	
Indeed, in view of \cref{ex:two:balls:iii} and \cref{ex:two:balls:iv} 
we have 
$(0,0) \in \intr \ball{-c}{r+1}=\intr (\dom A-\dom B)$.
Therefore, $A+B$ is maximally monotone by, e.g.,
\cite[Corollary~25.5(iii)]{BC2017}
and strongly monotone by \cref{ex:two:balls:i}.
Hence, $\zer(A+B) $ is a singleton by, 
e.g., \cite[Corollary~23.37]{BC2017}.

\cref{ex:two:balls:v:b}:	
The formula for $\gap$
follows from \cref{ex:two:balls:iv}.
Observe that $Z\subseteq\dom(-\gap+A)\cap\dom B(\cdot-\gap)
=\ball{0}{1}\cap\ball{c+\gap}{r}
=\{\sign(\gamma)(1,0)\}$.
This proves the first equivalence.
We now turn to the second equivalence.
Indeed, set $z=\sign(\gamma)(1,0)$.
Then 
$0\in -\gap+Rz+N_{\ball{0}{1}}z+b+N_{\ball{c}{r}}z
\siff 0\in -\gap+Rz+b+\RR\times \{0\}=Rz+b+\RR\times \{0\}
\siff \sign(\gamma)(\cos \theta ,\sin \theta)+(0,\beta )\in\RR\times \{0\}\siff 
\beta=-\sign(\gamma)\sin \theta$.

\cref{ex:two:balls:vi:c}:	
Combine \cref{ex:two:balls:v:b},
\cref{thm:DR:convergence}\cref{thm:DR:convergence:i}
and \cref{thm:PR:convergence}.
\cref{ex:two:balls:vi:d}:	
Combine \cref{ex:two:balls:v:b} and 
\cref{thm:DR:convergence}\cref{thm:DR:convergence:ii}.
\end{proof}	

\begin{figure}
\begin{center}
\includegraphics[scale=0.14]{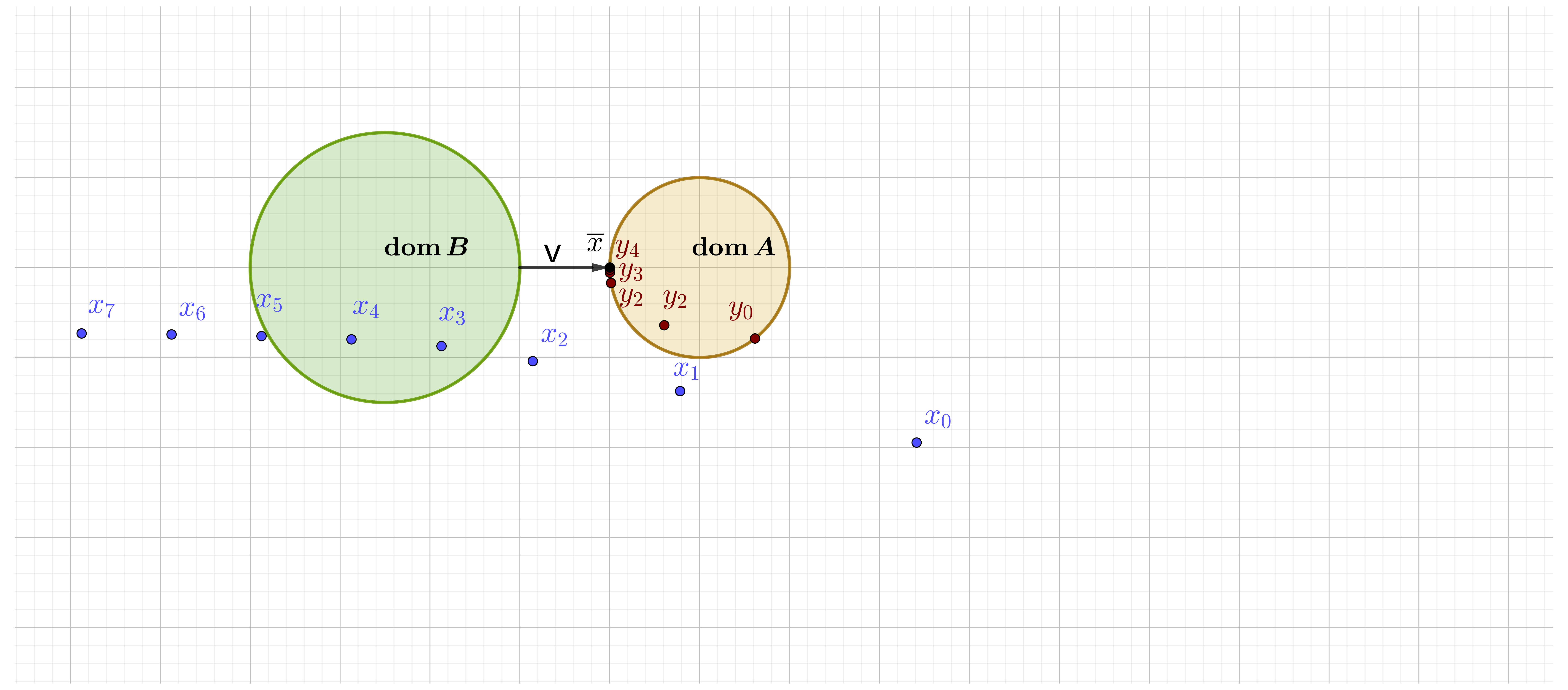}
\end{center}	
\caption{A \texttt{GeoGebra} \cite {geogebra} 
snapshot illustrating
\cref{ex:two:balls}\cref{ex:two:balls:vi} with $\theta={\pi}/{4}$,
$\gamma=-3.5$,
$r=3/2$
and 
$\lambda=1/2$.
The first few iterates of the governing sequence 
$(x_n)_\nnn=(T^n x)_\nnn$ (the blue  dots)
and the shadow sequence $(y_n)_\nnn=(J_AT^n x)_\nnn$ (the red dots)
are also depicted.}
\end{figure}

\section{Application to optimization problems}
\label{sec:5}
In this section we assume that
\begin{empheq}[box=\mybluebox]{equation}
\text{$f$ and $g$ are proper lower semicontinuous convex functions on $X$,}
\end{empheq}
and we set 
\begin{empheq}[box=\mybluebox]{equation}
\label{eq:AB:fg}
(A,B)=(\partial f,\partial g).
\end{empheq}
We use the abbreviations
\begin{empheq}[box=\mybluebox]{equation}
(\Rr_f,\Rr_g)=(2\prox_f-\Id,2\prox_g-\Id).
\end{empheq}
Hence,
\begin{equation}
T=T_{(\partial f,\partial g)}=\tfrac{1}{2}(\Id+\Rr_g\Rr_f)=\Id-\pr_f+\pr_g\Rr_f,
\end{equation}	
and 
\begin{equation}
Z=\menge{x\in X}{0\in -\gap+\partial f(x)+\partial g(x-\gap)}.
\end{equation}	

\begin{lemma}
\label{lem:fg}
Suppose that $(\exists h\in \{f,g\})$
such that $h$ is uniformly convex on $X$.
Then the following hold:
\begin{enumerate}
    \item 
    \label{lem:fg:i}
    $	\cran(\Id-T)=\overline{\dom f -\dom g}$.
    \item	
    \label{lem:fg:ii}
    $	\gap=P_{\overline{\dom f -\dom g}}0$.
    \item
    \label{lem:fg:iii}
    $\partial h$ is uniformly monotone.
\end{enumerate}	
Suppose that $Z\neq \fady $. 
Then there exists 
$\overline{x}\in X$ such that
the following hold:
\begin{enumerate}
    \setcounter{enumi}{3}
    \item
    \label{lem:fg:ii:iii}
    $-\scal{\cdot}{\gap}+f+g(\cdot-\gap)$ is uniformly convex,
    lower semicontinous and proper.
    \item
    \label{lem:fg:iv}
    $Z=\argmin(-\scal{\cdot}{\gap}+f+g(\cdot-\gap))=\{\overline{x}\}	$.
\end{enumerate}	
\end{lemma}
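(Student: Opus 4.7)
The plan is to handle the five parts in the order \cref{lem:fg:iii}, \cref{lem:fg:i}, \cref{lem:fg:ii}, \cref{lem:fg:ii:iii}, \cref{lem:fg:iv}, since \cref{lem:fg:iii} underwrites \cref{lem:fg:i}, and the last two parts depend only on uniform convexity plus $Z\neq\fady$. For \cref{lem:fg:iii} I would appeal to the classical dual correspondence between uniform convexity of $h$ and uniform monotonicity of $\partial h$ (see, e.g., \cite[Example~22.3(iii)]{BC2017}).

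For \cref{lem:fg:i}, the key observation is that uniform convexity of $h$ forces $h^{*}$ to be finite and Fr\'echet differentiable on all of $X$, so $\ran\partial h=\dom\partial h^{*}=X$. Consequently $\ran A+\ran B=X$, and the blanket assumption \cref{eq:A:ran:T} collapses to $\cran(\Id-T)=\overline{\dom A-\dom B}$. Using $\overline{\dom\partial f}=\overline{\dom f}$ and $\overline{\dom\partial g}=\overline{\dom g}$ (Br{\o}ndsted--Rockafellar), a short sandwich between $\dom\partial f-\dom\partial g$ and $\overline{\dom f}-\overline{\dom g}$ yields $\overline{\dom A-\dom B}=\overline{\dom f-\dom g}$. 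Part \cref{lem:fg:ii} is then immediate from the definition $\gap=P_{\cran(\Id-T)}0$.

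For \cref{lem:fg:ii:iii}, set $F=-\scal{\cdot}{\gap}+f+g(\cdot-\gap)$; the linear term is continuous, translation by $\gap$ preserves (uniform) convexity of $g$, and adding a convex function to a uniformly convex one preserves uniform convexity, so $F$ is uniformly convex regardless of whether $h=f$ or $h=g$. Lower semicontinuity is inherited from each summand, and any $x\in Z\neq\fady$ witnesses $F(x)<+\infty$, so $F$ is proper. For \cref{lem:fg:iv}, the idea is to \emph{bypass} the subdifferential sum rule entirely: if $x\in Z$, choose $u\in\partial f(x)$ and $v\in\partial g(x-\gap)$ with $u+v=\gap$ and add the two subgradient inequalities to obtain $F(y)\ge F(x)$ for every $y\in X$, so $Z\subseteq\argmin F$. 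Uniform convexity of $F$ then forces $|{\argmin F}|\le 1$, and combined with $Z\neq\fady$ this yields $Z=\argmin F=\{\overline{x}\}$ for a unique $\overline{x}\in X$.

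The main obstacle is the surjectivity claim in \cref{lem:fg:i}: one must deduce $\ran\partial h=X$ cleanly from uniform convexity, which I would do via the conjugacy pairing (uniform convexity of $h$ corresponds to uniform smoothness of $h^{*}$, hence $\dom h^{*}=X$), and then carefully chase closures to replace $\dom\partial f$ and $\dom\partial g$ by $\dom f$ and $\dom g$ respectively. Once this step is cleanly done, the remaining parts follow by the standard patterns sketched above, with the pleasant side benefit in \cref{lem:fg:iv} that uniqueness of the minimizer of a uniformly convex function eliminates any need for a constraint qualification.
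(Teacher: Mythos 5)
Your proposal is correct and follows essentially the same route as the paper: the key step in \cref{lem:fg:i} is the uniform-convexity/uniform-smoothness conjugate pair giving $\dom\partial h^{*}=X$ so that the range term in assumption \cref{eq:A:ran:T} saturates, followed by the closure identity $\overline{\dom\partial h}=\overline{\dom h}$, and \cref{lem:fg:iv} rests on uniform convexity of the shifted objective together with $Z\neq\fady$. The only cosmetic difference is that you make the inclusion $Z\subseteq\argmin F$ explicit via the subgradient inequalities and deduce that $\argmin F$ is a singleton from this inclusion plus uniqueness, whereas the paper separately invokes uniform monotonicity of the shifted operator and \cite[Proposition~17.26(iii)]{BC2017}; these yield the same conclusion.
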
	
\begin{proof}
\cref{lem:fg:i}:
It follows from \cite[Theorem~3.5.5(i)]{Za02}
that $h^*$ is uniformly smooth, hence
 $\dom  h^*=X$.
Now combine this with 
\cite[Theorem~3.3(i)]{WMM2024}.

\cref{lem:fg:ii}:
Combine \cref{lem:fg:i} and  \cref{eq:def:v}.

\cref{lem:fg:iii}:
This is \cite[Example~22.4(iii)]{BC2017}.

\cref{lem:fg:ii:iii}:
It is straightforward to verify that
$(\exists \overline{h}\in \{-\scal{\cdot}{\gap}+f,g(\cdot-\gap)\})$
such that $\overline{h}$ is uniformly convex 
and therefore $-\scal{\cdot}{\gap}+f+g(\cdot-\gap)$
is uniformly convex .
The lower semicontinuity 
of $-\scal{\cdot}{\gap}+f+g(\cdot-\gap)$
is a direct consequence of 
the lower semicontinuity of $f$ and $g$.
Finally observe that 
$\fady\neq Z\subseteq \dom f\cap \dom g(\cdot-\gap)$.
Hence, $-\scal{\cdot}{\gap}+f+g(\cdot-\gap)$ is proper.

\cref{lem:fg:iv}:
Indeed, recall that 
$Z=\menge{x\in X}{0\in -\gap+\partial f(x)+\partial g(x-\gap)}$.
On the one hand, it follows from \cref{lem:fg:iii}
that $-\gap+\partial f(x)+\partial g(x-\gap)$ is uniformly monotone.
On the other hand,
\cref{eq:zer:uniq} applied with $C$
replaced by $-\gap+\partial f(x)+\partial g(x-\gap)$
implies that $Z$ is a singleton.
Now observe that 
$Z\subseteq \argmin(-\scal{\cdot}{\gap}+f+g(\cdot-\gap))$,
and the latter set is a singleton by combining \cref{lem:fg:ii:iii} and, 
e.g., \cite[Proposition~17.26(iii)]{BC2017}.
\end{proof}	

The following fact will be used throughout the examples in this section to calculate $\gap$.

\begin{fact}
\label{fact:pazybr}
Let $x_0\in X$ and $\lambda\in\left]0,1\right ]$. Update $x_0$ via $x_{n+1}=T_\lambda x_n$ where $T_\lambda$ is as in \cref{eqn:T}. The following hold.
\begin{enumerate}
\item 
\label{eqn:v1}
$\dfrac{x_n}{n} \rightarrow -2\lambda \gap$.
\item 
\label{eqn:v2}
Suppose $\lambda\in\left]0,1\right [$.
Then $x_n-x_{n+1}\rightarrow 2\lambda\gap$.
\end{enumerate}
\end{fact}
\begin{proof}
See \cite{Pazy1970} for \cref{eqn:v1}
and  \cite{BBR78} or \cite{BR77} for \cref{eqn:v2}.
\end{proof}


\begin{theorem}[{\bf convergence of Douglas--Rachford algorithm}]
\label{thm:DR:convergence:fg}
Let $\lambda\in \left]0,1\right[$ and set
\begin{equation}
    T_\lambda =(1-\lambda) \Id +\lambda \Rr_g\Rr_f.
\end{equation}	
Suppose that $Z\neq \fady$ and that 
$(\exists h\in \{f,g\})$
such that $h$ is uniformly convex on $X$.
Then $(\exists \overline{x}\in X )$
such that $Z=\argmin(-\scal{\cdot}{\gap}+f+g(\cdot-\gap))=\{\overline{x}\}$.
Let $x\in X$. 
Then the following hold:
\begin{enumerate}	
    \item
    \label{thm:DR:convergence:fg:i}	
    $\pr_fT_\lambda^n x\to \overline{x}$.
    \item
    \label{thm:DR:convergence:fg:ii}
    $\pr_g\Rr_fT_\lambda^n x\to \overline{x}-\gap$.
\end{enumerate}	
\end{theorem}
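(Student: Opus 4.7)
The plan is to derive this theorem as an immediate specialization of the abstract result \cref{thm:DR:convergence} to the subdifferential setting, by leveraging \cref{lem:fg} to transfer the uniform convexity hypothesis on $h$ into a uniform monotonicity hypothesis on the corresponding subdifferential operator.

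First I would observe that, under the identification \cref{eq:AB:fg}, the resolvents satisfy $J_A = P_f$, $J_B = P_g$, the reflected resolvents satisfy $R_A = R_f$, $R_B = R_g$, and hence the Douglas--Rachford operator $T_\lambda$ defined via $(A,B)$ coincides with the one defined via $(f,g)$. Consequently all results from \cref{section:3} and \cref{sec:4} apply verbatim in this setting, provided that assumption \cref{eq:A:ran:T} holds — which is exactly the content of \cref{lem:fg}\cref{lem:fg:i}.

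Next I would invoke \cref{lem:fg}\cref{lem:fg:iii} to conclude that $\partial h$ is uniformly monotone, so that there exists $C \in \{A,B\} = \{\partial f, \partial g\}$ (namely $C = \partial h$) with the uniform monotonicity property required by \cref{thm:DR:convergence}. The hypothesis $Z \neq \emp$ is shared between the two theorems, so \cref{thm:DR:convergence} applies directly. It yields a unique $\overline{x} \in X$ with $Z = \{\overline{x}\}$, and the convergence statements $J_A T_\lambda^n x \to \overline{x}$ and $J_B R_A T_\lambda^n x \to \overline{x} - \gap$, which upon the resolvent identification become exactly \cref{thm:DR:convergence:fg:i} and \cref{thm:DR:convergence:fg:ii}.

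Finally, the additional description $Z = \argmin(-\scal{\cdot}{\gap} + f + g(\cdot - \gap))$ promised in the statement is supplied by \cref{lem:fg}\cref{lem:fg:iv}, which I would cite at the end to finish the identification of $\overline{x}$. Since every ingredient is already packaged in \cref{thm:DR:convergence} and \cref{lem:fg}, there is no substantive obstacle; the only care required is to confirm the correspondence between the abstract hypotheses and the convex-analytic ones, which is routine.
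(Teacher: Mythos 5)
Your proposal matches the paper's proof: the paper simply combines \cref{thm:DR:convergence}\cref{thm:DR:convergence:i}\&\cref{thm:DR:convergence:ii} applied with $(A,B,C)$ replaced by $(\partial f,\partial g,\partial h)$, together with \cref{lem:fg}\cref{lem:fg:iii}\&\cref{lem:fg:iv}, exactly as you do. One small inaccuracy: you claim that \cref{lem:fg}\cref{lem:fg:i} is what verifies the standing assumption \cref{eq:A:ran:T}, but in fact \cref{lem:fg}\cref{lem:fg:i} is derived \emph{from} \cref{eq:A:ran:T} (which is a blanket assumption carried through Sections \ref{section:3}--\ref{sec:5}); this does not affect the validity of the argument since that assumption is already in force, but the attribution is backwards.
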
	

\begin{proof}
\cref{thm:DR:convergence:fg:i}\&\cref{thm:DR:convergence:fg:ii}:		
Combine 
\cref{thm:DR:convergence}\cref{thm:DR:convergence:i}\&\cref{thm:DR:convergence:ii},
applied with $(A,B,C)$ replaced by
$(\partial f, \partial g,\partial h)$,
and \cref{lem:fg}\cref{lem:fg:iii}\&\cref{lem:fg:iv}.
\end{proof}	

\begin{theorem}[{\bf convergence of Peaceman--Rachford algorithm}]
\label{thm:PR:convergence:fg}
Set
\begin{equation}
    \widetilde{T}=\Rr_g\Rr_f.
\end{equation}	
Suppose that  $Z\neq \fady$ and that  $f$ is uniformly convex on $X$.
Then $(\exists \overline{x}\in X )$
such that $Z=\argmin(-\scal{\cdot}{\gap}+f+g(\cdot-\gap))=\{\overline{x}\}$.
Let $x\in X$. 
Then 
$\pr_f\widetilde{T}^n x\to \overline{x}$.
\end{theorem}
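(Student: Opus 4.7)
The plan is to obtain this statement as an immediate specialization of Theorem~\ref{thm:PR:convergence} to the pair of subdifferentials, using Lemma~\ref{lem:fg} to supply the two required ingredients: uniform monotonicity of one operator, and the precise identification of $Z$ with the singleton $\argmin(-\scal{\cdot}{\gap}+f+g(\cdot-\gap))$.

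First, I would set $(A,B)=(\partial f,\partial g)$, so that $\widetilde{T}=R_BR_A$ coincides with the Peaceman--Rachford operator considered in the abstract framework of \cref{sec:4}. Since $f$ is uniformly convex by hypothesis, Lemma~\ref{lem:fg}\cref{lem:fg:iii} applied with $h=f$ delivers that $A=\partial f$ is uniformly monotone; together with the standing assumption $Z\neq\fady$, this verifies the hypotheses of Theorem~\ref{thm:PR:convergence}.

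Next, I would invoke Theorem~\ref{thm:PR:convergence} directly. It produces some $\overline{x}\in X$ with $Z=\{\overline{x}\}$ and establishes $J_A\widetilde{T}^n x\to \overline{x}$. Since $J_{\partial f}=P_f$ by the standard identification of the resolvent of a subdifferential with its proximal mapping, this is exactly the desired convergence $P_f\widetilde{T}^n x\to \overline{x}$.

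Finally, to upgrade the characterization of $Z$ from the set of zeros of $-\gap+\partial f+\partial g(\cdot-\gap)$ to the argmin set of the perturbed sum, I would apply Lemma~\ref{lem:fg}\cref{lem:fg:iv}, which gives $Z=\argmin(-\scal{\cdot}{\gap}+f+g(\cdot-\gap))=\{\overline{x}\}$. There is no substantive obstacle here: the heavy lifting (uniform monotonicity transferring to the perturbed sum, singleton-ness of $Z$, strong convergence of the shadow sequence via Minty's parametrization and \cref{cor:limit}\cref{cor:limit:i}) has already been carried out in Theorem~\ref{thm:PR:convergence} and Lemma~\ref{lem:fg}; the present statement is essentially a translation into the language of convex functions and proximal mappings.
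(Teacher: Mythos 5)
Your proposal is correct and matches the paper's own proof essentially verbatim: both combine Theorem~\ref{thm:PR:convergence} (with $(A,B)=(\partial f,\partial g)$) with Lemma~\ref{lem:fg}\cref{lem:fg:iii} to supply uniform monotonicity of $\partial f$ and Lemma~\ref{lem:fg}\cref{lem:fg:iv} for the argmin characterization of $Z$, and then identify $J_{\partial f}=P_f$. No further comment is needed.
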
	

\begin{proof}
Combine 
\cref{thm:PR:convergence}
applied with $(A,B,C)$ replaced by
$(\partial f, \partial g,\partial h)$,
and \cref{lem:fg}\cref{lem:fg:iii}\&\cref{lem:fg:iv}.	
\end{proof}	

\begin{proposition}
\label{prop:example:fg}
Suppose that $X$ is finite-dimensional,
let $w\in X$ and let $U$ and $V$ be 
nonempty  polyhedral subsets\footnote{A subset $U$ of $X$
    is a polyhedral if it is a finite intersection of closed halfspaces.} of $X$.
Let $r>0$ and set $f=\tfrac{r}{2}\norm{\cdot-w}^2+\iota_U$
and set $g=\iota_V$. Let $x\in X$.
Then $(\exists \overline{x}\in X)$ such that the following holds:
\begin{enumerate}
    \item
    \label{prop:example:fg:i}
    $f$ is strongly convex. 	
    \item
    \label{prop:example:fg:ii}
    $\cran(\Id-T)={U-V}$.
    \item
    \label{prop:example:fg:iii}
    $\gap \in U-V=\dom f-\dom g$.
    \item
    \label{prop:example:fg:iv}
    $Z
    =\argmin(-\scal{\cdot}{\gap}
    +\tfrac{r}{2}\norm{\cdot-w}^2+\iota_U+\iota_V(\cdot-\gap))
    =\{\overline{x}\}$.
    \item
    \label{prop:example:fg:v}
    $\gap\in \ran (\Id-T)$.
    \item
    \label{prop:example:fg:vi}
    $\pr_fx=P_U\left(\tfrac{r}{r+1}x+\tfrac{1}{r+1}w\right)$.
    \item
    \label{prop:example:fg:vii}
    $\pr_gx=P_Vx$.
    \item
    Let $\lambda\in \ocint{0,1}$ and 
    set $T_\lambda=(1-\lambda)\Id+\lambda \Rr_g\Rr_f$. 
    Then we have: 
    \begin{enumerate}
        \item
        \label{prop:example:fg:viii:a}
        $\pr_f T_\lambda^nx\to \overline{x}$.
        \item
        \label{prop:example:fg:viii:b}
        Suppose that $\lambda\in \opint{0,1}$.
        Then $\pr_g\Rr_f T_\lambda^nx\to \overline{x}-\gap$.
    \end{enumerate}	
\end{enumerate}	
\end{proposition}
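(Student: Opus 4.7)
The plan is to decompose the proposition into three blocks: the prox-style computations (i), (vi), (vii); the minimal-displacement identifications (ii), (iii), (v) via the groundwork of \cref{lem:fg}; and the existence statement (iv) together with its dynamic consequences (viii)(a)--(b).

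For the computational items, $f$ inherits $\gamma$-strong convexity from its quadratic term, giving (i). Item (vi) is a standard completing-the-square exercise on $\tfrac{1}{2}\|y-x\|^2 + \tfrac{\gamma}{2}\|y-w\|^2$, whose unconstrained minimizer is then projected onto $U$. Item (vii) is immediate from $\prox_{\iota_V} = P_V$. Items (ii)--(iii) follow from \cref{lem:fg}: by (i), $f$ is uniformly convex, so \cref{lem:fg}\cref{lem:fg:i} gives $\cran(\Id - T) = \overline{U - V}$; the Minkowski difference of two polyhedra is polyhedral and hence closed, proving (ii), and then (iii) follows because $\gap = P_{U-V}(0) \in U - V$ by \cref{lem:fg}\cref{lem:fg:ii}.

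The heart of the proof is (iv). To apply \cref{lem:fg}\cref{lem:fg:iv} we must first certify that $Z \neq \fady$. I would bypass the resolvent--fixed-point route and work directly with the primal function $F := -\scal{\cdot}{\gap} + f + g(\cdot - \gap)$: by (iii), $\dom F = U \cap (V + \gap)$ is nonempty, and strong convexity of $f$ propagates to $F$, so $F$ is proper, lsc, and strongly convex, hence admits a unique minimizer $\overline{x}$. The key technical step---and the main obstacle---is converting $0 \in \partial F(\overline{x})$ into $0 \in -\gap + \partial f(\overline{x}) + \partial g(\overline{x} - \gap)$. This subdifferential sum rule is not automatic because neither $\dom f = U$ nor $\dom g(\cdot - \gap) = V + \gap$ is the full space, but polyhedrality of $U$ and $V$ unlocks the polyhedral sum rule (equivalently, the unconditional normal-cone formula $N_{U \cap (V + \gap)} = N_U + N_{V + \gap}$ for polyhedral intersections). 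This places $\overline{x}$ in $Z$, and the $\argmin$ identification in (iv) then follows from \cref{lem:fg}\cref{lem:fg:iv}.

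Item (v) is then immediate from (iv) via \cref{e:assump:z}: $Z \neq \fady \siff \gap \in \ran(\Id - T)$. For (viii)(a), when $\lambda \in \opint{0,1}$ invoke \cref{thm:DR:convergence:fg}\cref{thm:DR:convergence:fg:i}, while the boundary case $\lambda = 1$ is covered by \cref{thm:PR:convergence:fg}; both apply with $h$ taken to be $f$ in view of (i). Item (viii)(b) is a direct application of \cref{thm:DR:convergence:fg}\cref{thm:DR:convergence:fg:ii}. All prerequisites of these theorems---namely $Z \neq \fady$ and uniform convexity of $f$---have been supplied by the earlier items.
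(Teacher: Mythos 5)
Your proof is correct, and most items --- (i), (ii)--(iii), (v), (vi)--(vii), and the dynamic consequences (viii) --- track the paper's argument closely. The genuine divergence lies in the crux, item (iv). The paper first applies the polyhedral subdifferential sum rule to conclude that $N_U + N_V(\cdot - \gap) = \partial\bigl(\iota_U + \iota_V(\cdot - \gap)\bigr)$ is maximally monotone, then feeds the shifted and scaled operator into Minty's theorem (\cref{thm:minty}) to deduce that $-\gap + \gamma\Id - \gamma w + N_U + N_V(\cdot - \gap)$ is surjective, hence $Z \neq \fady$. You instead argue on the primal side: the perturbed objective $F = -\scal{\cdot}{\gap} + f + g(\cdot - \gap)$ is proper (item (iii) forces $U \cap (V + \gap) \neq \fady$), lower semicontinuous, and strongly convex, so it admits a unique minimizer, and you then invoke the same polyhedral constraint qualification --- here in the guise of the normal-cone identity $N_{U \cap (V+\gap)} = N_U + N_{V+\gap}$ --- to convert first-order optimality into the inclusion defining $Z$. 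Both proofs lean on exactly the same polyhedrality-based sum rule; they merely reach $Z \neq \fady$ by different devices, Minty's surjectivity characterization on the one hand and the Weierstrass-type existence theorem for strongly convex minimization on the other. Your route is somewhat more self-contained within the convex-optimization setting of Section~5, while the paper's route recycles an operator-theoretic fact it has already stated. Neither is shorter or stronger; they are equivalent arguments in different dress, and once $Z \neq \fady$ is in hand both proofs finish by citing \cref{lem:fg}\cref{lem:fg:iv}, \cref{e:assump:z}, and \cref{thm:DR:convergence:fg}/\cref{thm:PR:convergence:fg} in the same way.
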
	
\begin{proof}
\cref{prop:example:fg:i}:
This is a direct consequence of 
the strong convexity of $\tfrac{r}{2}\norm{x-w}^2$.

\cref{prop:example:fg:ii}\&\cref{prop:example:fg:iii}:
Observe that  $\dom f-\dom g=U-V $ is polyhedral 
by \cite[Corollary~19.3.2]{Rock70}	
hence closed.
Now combine with \cref{prop:example:fg:i} and 
\cref{lem:fg}\cref{lem:fg:i}\&\cref{lem:fg:ii}.

\cref{prop:example:fg:iv}:
It follows from \cref{prop:example:fg:iii} that
$\dom N_U\cap \dom N_{V}(\cdot-\gap)=U\cap(\gap+V)\neq \fady$.
Hence  $N_U+ N_{V}(\cdot-\gap)=\partial(\iota_U+\iota_V(\cdot+\gap))$ 
by, e.g.,
\cite[Theorem~16.47(iii)]{BC2017},
and therefore is maximally monotone.
Applying \cref{thm:minty}  with $A$
replaced by $\tfrac{1}{r}(-r w+N_U+ N_{V}(\cdot-\gap) )$
yields that $-\gap+r \Id-r w+N_U+ N_{V}(\cdot-\gap)$
is surjective, hence $Z\neq \fady$.
Now combine with \cref{lem:fg}\cref{lem:fg:iv}.

\cref{prop:example:fg:v}:
Combine \cref{prop:example:fg:iv}
and \cref{e:assump:z}.

\cref{prop:example:fg:vi}\&\cref{prop:example:fg:vii}:
This is clear by, e.g., \cite[Examples~23.3\&23.4]{BC2017}.

\cref{prop:example:fg:viii:a}:
Apply \cref{thm:DR:convergence:fg}\cref{thm:DR:convergence:fg:i}
(for the case $\lambda\in \opint{0,1}$)
and  \cref{thm:PR:convergence:fg} (for the case 
$\lambda =1$)
in view of  \cref{prop:example:fg:i} and 
\cref{prop:example:fg:iv}.

\cref{prop:example:fg:viii:b}:
Apply \cref{thm:DR:convergence:fg}\cref{thm:DR:convergence:fg:ii}
in view of  \cref{prop:example:fg:i} and 
\cref{prop:example:fg:iv}.
\end{proof}

\begin{example}
\label{example:fg}
Suppose that $X=\RR^2$, let $(\alpha_i,\beta_i)\in \RR^2$
be such that $\alpha_i\le \beta_i$, $i\in \{1,2\}$,
let $U=\RR\times \{0\}$, let $w=(w_1,0)\in U$ and let 
$V=\clint{\alpha_1,\beta_1}\times \clint{\alpha_2,\beta_2}$.
Set $f=\tfrac{1}{2}\norm{\cdot-w}^2+\iota_U$
and set $g=\iota_V$. Let $x=(x_1,x_2)\in \RR^2$.
Then  the following hold:
\begin{enumerate}
    \item
    \label{example:fg:i}
    $f$ is strongly convex. 	
    \item
    \label{example:fg:ii}
    $\cran(\Id-T)=U-V=\RR \times \clint{-\beta_2,-\alpha_2}$.
    \item
    \label{example:fg:iii}
    $\gap \in U-V=\dom f-\dom g$.
    \item
    \label{example:fg:iv}
    $Z
    =\argmin(\tfrac{1}{2}\norm{\cdot-w}^2+\iota_U+\iota_V(\cdot-\gap))
    =\{ (P_{\clint{\alpha_1,\beta_1}}w_1,0)\}$.
    \item
    \label{example:fg:v}
    $\gap=(0,\min\{\max\{0,-\beta_2\},-\alpha_2\})
    \in \ran (\Id-T)$.
    \item
    \label{example:fg:vi}
    $\pr_fx=\tfrac{r}{r+1}P_Ux+\tfrac{1}{r+1}w$.
    \item
    \label{example:fg:vii}
    $\pr_gx=P_Vx$, where $(P_Vx)_i=\min\{\max\{x_i,\alpha_i\},\beta_i\}$.
    \item
    Let $\lambda\in \ocint{0,1}$ and 
    set $T_\lambda=(1-\lambda)\Id+\lambda \Rr_g\Rr_f$. 
    Then we have: 
    \begin{enumerate}
        \item
        \label{example:fg:viii:a}
        $\pr_f T_\lambda^nx\to (P_{\clint{\alpha_1,\beta_1}}w_1,0)$.
        \item
        \label{example:fg:viii:b}
        Suppose that $\lambda\in \opint{0,1}$.
        Then $\pr_g\Rr_f T_\lambda^nx\to (P_{\clint{\alpha_1,\beta_1}}w_1,
        \max\{\min\{0,\beta_2\},\alpha_2\})$.
    \end{enumerate}	
\end{enumerate}	
\end{example}
\begin{proof}
\cref{example:fg:i}--\cref{example:fg:iii}:
This is 
\cref{prop:example:fg}\cref{prop:example:fg:i}--\cref{prop:example:fg:iii}
applied with $X$ replaced by $\RR^2$
and $r=1$ 
by observing that $U$ and $V$ are nonempty 
polyhedral subsets of $\RR^2$. 

\cref{example:fg:iv}:
Indeed, 	
\begin{subequations}
    \begin{align}
        Z&=\menge{x\in\RR^2}{0\in -\gap+x-w+N_Ux+N_V(x-\gap)}	
        \\
        &=\menge{x\in\RR^2}{0\in -\gap+x-w+U^{\perp}+N_V(x-\gap)}
        \\
        &=\menge{x\in\RR^2}{0\in x-w+U^\perp+N_V(x-\gap)}
        \label{se:c}
        \\
        &\subseteq
        \argmin(\tfrac{1}{2}\norm{x-w}^2+\iota_U+\iota_V(\cdot-\gap)),
    \end{align}
\end{subequations}	
where \cref{se:c} follows from combining
\cref{example:fg:iii} and \cite[Remark~2.8(ii)]{Lukepaper}.
Now on the one hand, \cref{prop:example:fg}\cref{prop:example:fg:iv}
implies that $Z$ is a singleton. 
On the other hand,  
the strong convexity of $\tfrac{1}{2}\norm{x-w}^2$ guarantees that
$	\argmin(\tfrac{1}{2}\norm{x-w}^2+\iota_U+\iota_V(\cdot-\gap))$
is a singleton.		Altogether, this verifies the 
first identity in 		\cref{example:fg:iv}.
We now turn to the second identity.
Set $\overline{x}=(P_{\clint{\alpha_1,\beta_1}}w_1,0)\in U$
and 
observe that by \cref{se:c} we have 
$\{\overline{x}\}= Z\siff w-\overline{x}\in U^\perp +N_{V}(\overline{x}-\gap)$.
We examine three cases.
\textsc{Case~1:} 
$w_1\in \clint{\alpha_1,\beta_1}$. Then 
$w-\overline{x}=(0,0)\in U^\perp +N_{V}(\overline{x}-\gap)$. 
\textsc{Case~2:}   $w_1<\alpha_1$.
In this case $N_{V}(\overline{x}-\gap)\in 
\{\RR_{-}\times \RR_{+},\RR_{-}\times \RR_{-}\}$, hence 
$w-\overline{x}\in \RR_{--}\times\{0\}
\subseteq \RR_{-}\times\RR
= U^\perp +N_{V}(\overline{x}-\gap)$. 
\textsc{Case~3:}   $w_1>\alpha_1$.
Proceed similar to \textsc{Case~2}.   

\cref{example:fg:v}:
This is a direct consequence of \cref{example:fg:ii}. 

\cref{example:fg:vi}:
Combine \cref{prop:example:fg}\cref{prop:example:fg:vi} and \cite[5.13(i)]{Deutsch}.

\cref{example:fg:vii}:
This follows from, e.g., \cite[Lemma~6.26]{Beck2017}.

\cref{example:fg:viii:a}\&\cref{example:fg:viii:b}:
Combine \cref{example:fg:iv}, \cref{example:fg:v}   and \cref{prop:example:fg}\cref{prop:example:fg:viii:a}\cref{prop:example:fg:viii:b}.
\end{proof}	

\begin{figure}
\begin{center}
\includegraphics[scale=0.95]{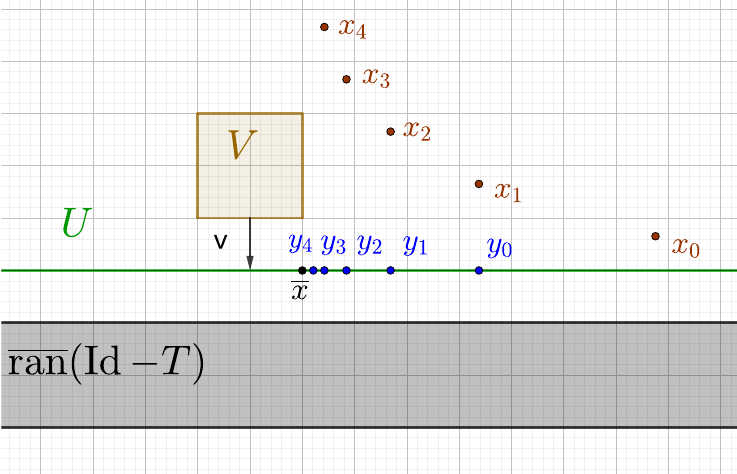}
\end{center}	
\caption{A \texttt{GeoGebra} \cite{geogebra} snapshot illustrating
\cref{example:fg} with 
$\lambda=1/2$,
$(\alpha_1,\beta_1) = (-1,1)$,
$(\alpha_2,\beta_2) = (1,3)$
and 
$w=(1,0)$.
The first few iterates of the governing sequence 
$(x_n)_\nnn=(T^n x)_\nnn$ (the red  dots)
and the shadow sequence $(y_n)_\nnn=(J_AT^n x)_\nnn$ (the blue dots)
are also depicted.}
\end{figure}	

\begin{example}
\label{example:orth}
Suppose that $X=\RR^d$ and let $x=(x_1,\dots,x_d)\in X$. Let $U=\RR_+^d$ and $V=\menge{x\in X}{Lx=b}$ where $L\in\RR^{m\times d}$ has full rank, $b\in\RR^m$, $m\leq d$.
Set $f=\tfrac{r}{2}\norm{\cdot-z}^2+\iota_U$ where $r>0$, $z\in X$, $\gamma:=\frac{1}{1+r}\in]0,1[$
and set $g=\iota_V$.
Then  the following hold:
\begin{enumerate}
    \item
    \label{example:orth:i}
    $f$ is strongly convex. 	
    \item
    \label{example:orth:ii}
    $\cran(\Id-T)=U-V.$
    \item
    \label{example:orth:iii}
    $\gap \in U-V=\dom f-\dom g$.
    \item
    \label{example:orth:iv}
    $Z
    =\argmin(\tfrac{r}{2}\norm{\cdot-w}^2+\iota_U+\iota_V(\cdot-\gap))=\{\overline{x}\}$.
    \item
    \label{example:orth:v}
    $\gap \in \ran (\Id-T)$.
    \item
    \label{example:orth:vi}
    $\pr_fx=P_U(\gamma x+(1-\gamma)z)$, where $(P_Ux)_i=\min\{x_i,0\}$.
    \item
    \label{example:orth:vii}
    $\pr_gx=P_Vx=x-L\tran(LL\tran)^{-1}(Lx-b)$.
    \item
    Let $\lambda\in \ocint{0,1}$ and 
    set $T_\lambda=(1-\lambda)\Id+\lambda \Rr_g\Rr_f$. 
    Then we have: 
    \begin{enumerate}
        \item
        \label{example:orth:viii:a}
        $\pr_f T_\lambda^nx\to \overline{x}$.
        \item
        \label{example:orth:viii:b}
        Suppose that $\lambda\in \opint{0,1}$.
        Then $\pr_g\Rr_f T_\lambda^nx\to \overline{x}-\gap$.
    \end{enumerate}	
\end{enumerate}	
\end{example}
\begin{proof}
\cref{example:box:i}--\cref{example:box:iii}:
This is 
\cref{prop:example:fg}\cref{prop:example:fg:i}--\cref{prop:example:fg:iii}
applied with $X$ replaced by $\RR^d$
by observing that $U$ and $V$ are nonempty 
polyhedral subsets of $\RR^d$.

\cref{example:box:iv}--\cref{example:box:v}:
This is \cref{prop:example:fg}\cref{prop:example:fg:iv}--\cref{prop:example:fg:v}.

\cref{example:box:vi}:
This follows from \cite[Lemma~6.26]{Beck2017} and \cite[Prop.~24.8(i)]{BC2017}.

\cref{example:box:vii}
This follows from \cite[Lemma~6.26]{Beck2017}.

\cref{example:box:viii:a}--\cref{example:box:viii:b}
This is \cref{prop:example:fg}\cref{prop:example:fg:viii:a}--\cref{prop:example:fg:viii:b}.
\end{proof}

\begin{example}
\label{example:box}
Suppose that $X=\RR^d$ and let $x=(x_1,\dots,x_d)\in X$. Let $U=\menge{x\in X}{\alpha_i\leq x_i \leq\beta_i}$
such that $0\le\alpha_i\le \beta_i$ for $i=1,\dots,d$ and let $V=\menge{x\in X}{Lx=b}$ where $L\in\RR^{m\times d}$ has full rank, $b\in\RR^m$, $m\leq d$.
Set $f=\tfrac{r}{2}\norm{\cdot-z}^2+\iota_U$ where $r>0$, $\gamma:=\frac{1}{1+r}\in]0,1[$, $z\in X$
and set $g=\iota_V$.
Then  the following hold:
\begin{enumerate}
    \item
    \label{example:box:i}
    $f$ is strongly convex. 	
    \item
    \label{example:box:ii}
    $\cran(\Id-T)=U-V.$
    \item
    \label{example:box:iii}
    $\gap \in U-V=\dom f-\dom g$.
    \item
    \label{example:box:iv}
    $Z
    =\argmin(\tfrac{r}{2}\norm{\cdot-w}^2+\iota_U+\iota_V(\cdot-\gap))=\{\overline{x}\}$.
    \item
    \label{example:box:v}
    $\gap \in \ran (\Id-T)$.
    \item
    \label{example:box:vi}
    $\pr_fx=P_U(\gamma x+(1-\gamma)z)$, where $(P_Ux)_i=\min\{\max\{x_i,\alpha_i\},\beta_i\}$.
    \item
    \label{example:box:vii}
    $\pr_gx=P_Vx=x-L\tran(LL\tran)^{-1}(Lx-b)$.
    \item
    Let $\lambda\in \ocint{0,1}$ and 
    set $T_\lambda=(1-\lambda)\Id+\lambda \Rr_g\Rr_f$. 
    Then we have: 
    \begin{enumerate}
        \item
        \label{example:box:viii:a}
        $\pr_f T_\lambda^nx\to \overline{x}$.
        \item
        \label{example:box:viii:b}
        Suppose that $\lambda\in \opint{0,1}$.
        Then $\pr_g\Rr_f T_\lambda^nx\to \overline{x}-\gap$.
    \end{enumerate}	
\end{enumerate}	
\end{example}
\begin{proof}
\cref{example:box:i}--\cref{example:box:iii}:
This is 
\cref{prop:example:fg}\cref{prop:example:fg:i}--\cref{prop:example:fg:iii}
applied with $X$ replaced by $\RR^d$
by observing that $U$ and $V$ are nonempty 
polyhedral subsets of $\RR^d$.

\cref{example:box:iv}--\cref{example:box:v}:
This is \cref{prop:example:fg}\cref{prop:example:fg:iv}--\cref{prop:example:fg:v}.

\cref{example:box:vi}:
This follows from \cite[Lemma~6.26]{Beck2017} and \cite[Prop.~24.8(i)]{BC2017}.

\cref{example:box:vii}:
This follows from \cite[Lemma~6.26]{Beck2017}.

\cref{example:box:viii:a}--\cref{example:box:viii:b}:
This is \cref{prop:example:fg}\cref{prop:example:fg:viii:a}--\cref{prop:example:fg:viii:b}.
\end{proof}

\section{Numerical experiments}
\label{sec:num_exp}

This section contains numerical experiments with the inconsistent problems in \cref{example:orth}--\cref{example:box}. We introduce Dykstra's algorithm to compare performance with the Douglas--Rachford algorithm and Peaceman--Rachford algorithm. Following this we give experiments using different values of the algorithmic parameters $\gamma$ and $\lambda$ to determine which choices provide optimal performance, i.e. (i) the smallest number of iterations, (ii) an accurate approximate solution $\overline{x}$, and (iii)~an accurate approximation of the gap vector $\gap$.

\label{sec:6}
Let ${\cal A}, {\cal B}$ be closed convex subsets of $X$ and $z\in X$. Dykstra's projection algorithm (see \cite{BoyleDykstra}) operates as follows:  
Set $a_0 := z,\, p_0 := 0$ and $q_0 := 0$.  Given $a_n,p_n,q_n$, where $n\geq 0$, update
\begin{align*}
&b_{n} := P_{\cal{B}}(a_{n}+q_{n})\,,
\ \ \ \ \ \,\,
q_{n+1}:= a_{n}+q_n-b_{n}\,, \\
&a_{n+1} := P_{\cal{A}}(b_{n}+p_n)\,,
\ \ \ 
p_{n+1} := b_{n}+p_n-a_{n+1}\,.
\end{align*}
It is known that both $(a_n)_{n\in\mathbb{N}}$ and $(b_n)_{n\in\mathbb{N}}$ converge \emph{strongly} (i.e., converge in norm), to $P_{\cal{A}\cap \cal{B}}(z)$.

From \cite[Corollary~3.4]{BB94} we have that the sequences $(a_n)_\nnn, (b_n)_\nnn$ satisfy
\begin{equation}\label{eqn:v_Dyk}
b_n-a_n,\ b_n-a_{n+1} \rightarrow \gap.
\end{equation}

Recall that the set $V$ in \cref{example:orth}--\cref{example:box} is given as $V=\menge{x\in X}{Lx=b}$. In these numerical experiments the $m\times d$ matrix $L$ and $m\times 1$ vector $b$ were generated using uniformly distributed random numbers between $-50$ and $50$. The numerical experiments for \cref{example:orth}--\cref{example:box} with the same dimensions $m,d$ use the same matrix $L$ and vector $b$. To ensure that $U\cap V=\emptyset$ in \cref{example:orth} we used Farkas' Lemma to choose the sign of the columns of $L$ in such a way that we can guarantee that $x\in V$ would not intersect with $U=\mathbb{R}_+^d$. In \cref{example:box} we choose the box $U$ as a subset of $\mathbb{R}_+^d$ to ensure the intersection remains empty. We can write \cref{example:orth}--\cref{example:box} as
\[
\ds\minimize{}{} \ \ \ds \phi(x) := \frac{r}{2}\|x-z\|^2\, \,\mbox{ subject to} \ \ x\in U\cap V\,.
\]

\subsection{Computational algorithms}
Below are the algorithms as implemented in the experiments. Recall that for Peaceman--Rachford $\lambda$ defined in Step 1 below is fixed as 1.
\noindent
\begin{algorithm}{\bf(Douglas--Rachford Relaxation and Peaceman--Rachford)}\label{alg:DR} \rm
\begin{description}
\item[Step 1] ({\em Initialization}) Choose a parameter $r>0$, $\gamma:=1/(r+1)$, $\lambda\in\left]0,1\right]$ and the initial iterate $x_0$ arbitrarily. 
Choose a small parameter $\varepsilon>0$, and set $n=0$. 
\item[Step 2] ({\em Projection onto $U$})  Set\ \ $x^- 
:= x_{n}$. 
Compute\ \ $\widetilde{x} = P_{U}(\gamma x^-+(1-\gamma)z)$\ \ using \cref{example:orth}\cref{example:orth:vi} or \cref{example:box}\cref{example:box:vi}. 
\item[Step 3] ({\em Projection onto $V$})
Set\ \ $x^- := 2\widetilde{x}-x_n$. 
Compute\ \ $\widehat{x} = P_{V}(x^-)$\ \ using \cref{example:orth}\cref{example:orth:vii} or \cref{example:box}\cref{example:box:vii}.
\item[Step 4] ({\em Update}) Set\ \ $x_{n+1} := x_n + 2\lambda(\widehat{x} - \widetilde{x})$.
\item[Step 5] ({\em Stopping criterion}) If $\|\widetilde{x} - \widetilde{x}_n\|_\infty \le \varepsilon$, then return $\widetilde{x}$, and $\gap$ using \cref{fact:pazybr}\cref{eqn:v1} and \cref{eqn:v2}, and stop.  
Otherwise, set $\widetilde{x}_{n+1}:=\widetilde{x}$, $n := n+1$, and go to Step 2.
\end{description}
\end{algorithm}

\noindent
\begin{algorithm}{\bf ({Dykstra})}\label{alg:dyk} \rm
\begin{description}
\item[Step 1] ({\em Initialization}) Choose the initial iterates $x_0=0,\,p_0=0$ and $q_0=0$. Choose the initial iterate $\widetilde{x}_0$ arbitrarily. Choose a small parameter $\varepsilon>0$, and set $n=0$. 
\item[Step 2] ({\em Projection onto $U$})  Set\ \ $x^- 
:= x_{n} + q_n$.  Compute\ \ $\widetilde{x} = P_U(x^-)$\ \ using \cref{example:orth}\cref{example:orth:vi} or \cref{example:box}\cref{example:box:vi}.
\item[Step 3] ({\em Projection onto $V$}) Set\ \ $x^- :=
\widetilde{x}+p_n$. Compute\ \ $\widehat{x} = P_V(x^-)$\ \ using \cref{example:orth}\cref{example:orth:vii} or \cref{example:box}\cref{example:box:vii}.
\item[Step 4] ({\em Update}) Set\ \ $x_{n+1} 
:= \widehat{x}$,\,\,
$q_{n+1} 
:= x_{n} + q_n - \widetilde{x}$\ \ and\ \ $p_{n+1} 
:= \widetilde{x} + p_n - \widehat{x}$\,.
\item[Step 5] ({\em Stopping criterion}) If $\|\widetilde{x} - \widetilde{x}_n\|_\infty \le \varepsilon$, then return $\widetilde{x}$, and $\gap$ using \cref{fact:pazybr}\cref{eqn:v1} and \cref{eqn:v2}, and stop.  
Otherwise, set $\widetilde{x}_{n+1}:=\widetilde{x}$, $n := n+1$, and go to Step 2.
\end{description}
\end{algorithm}

\subsection{Computations}

\cref{tab:orth}--\cref{tab:box} show the error in the objective $\phi$, the gap vector $\gap$ and the solution $\overline{x}$. The ``exact'' solution used in these calculations was found using Dykstra's algorithm with a tolerance $\epsilon=10^{-12}$ (except for \cref{example:box} when $(m,\,d) = (50,\,1000)$ with $\epsilon=3\times10^{-12}$ since Dykstra was unable to terminate with $\epsilon=10^{-12}$). For $d>1000$ we have not been able to \emph{consistently} reach our desired accuracy. Further investigation is needed to understand the cause of this issue.

In \Cref{fig:plots}, we show plots of the number of iterations taken by the DR relaxation with $\lambda\in \{0.25,\ 0.5,\ 0.75,\, 0.9,\ 1\}$, against the parameter $\gamma\in\left]0,1\right[$. We obtained these plots for $500$ values of $\gamma\in\left]0,1\right[$. For the sake of brevity we only show four of these plots but we have generated these for every problem instance found in \cref{tab:orth}--\cref{tab:box}. These plots have informed our choice of a value of $\gamma$ for each problem instance that provides optimal performance of the algorithms.

In \Cref{fig:plots} for \cref{example:box} with $(m,\,d) = (500,\,1000)$, $\lambda\in \{0.25,\,0.75\}$ and $(m,\,d) = (50,\,1000)$, we observe that for values of $\gamma$ close to 1, the methods terminate in less than 10 iterations though we have not arrived at the solution. In these examples we thus chose the next best values of $\gamma$. This also occurred for \cref{example:box} with $(m,\,d) = (10,\,100)$ so we again chose the next best value of $\gamma$. Though we have not included the figure for \cref{example:box} with $(m,\,d) = (65,\,70)$ this example showed $n=2$ for all $\gamma\in[0.6,1[$ and every value of $\lambda$, thus in \cref{tab:box} we set $\gamma=0.6$ but any $\gamma\in[0.6,1[$ could have been chosen.

\begin{figure}[H]
\begin{subfigure}{.5\textwidth}
    \centering
    \includegraphics[width=7.5cm]{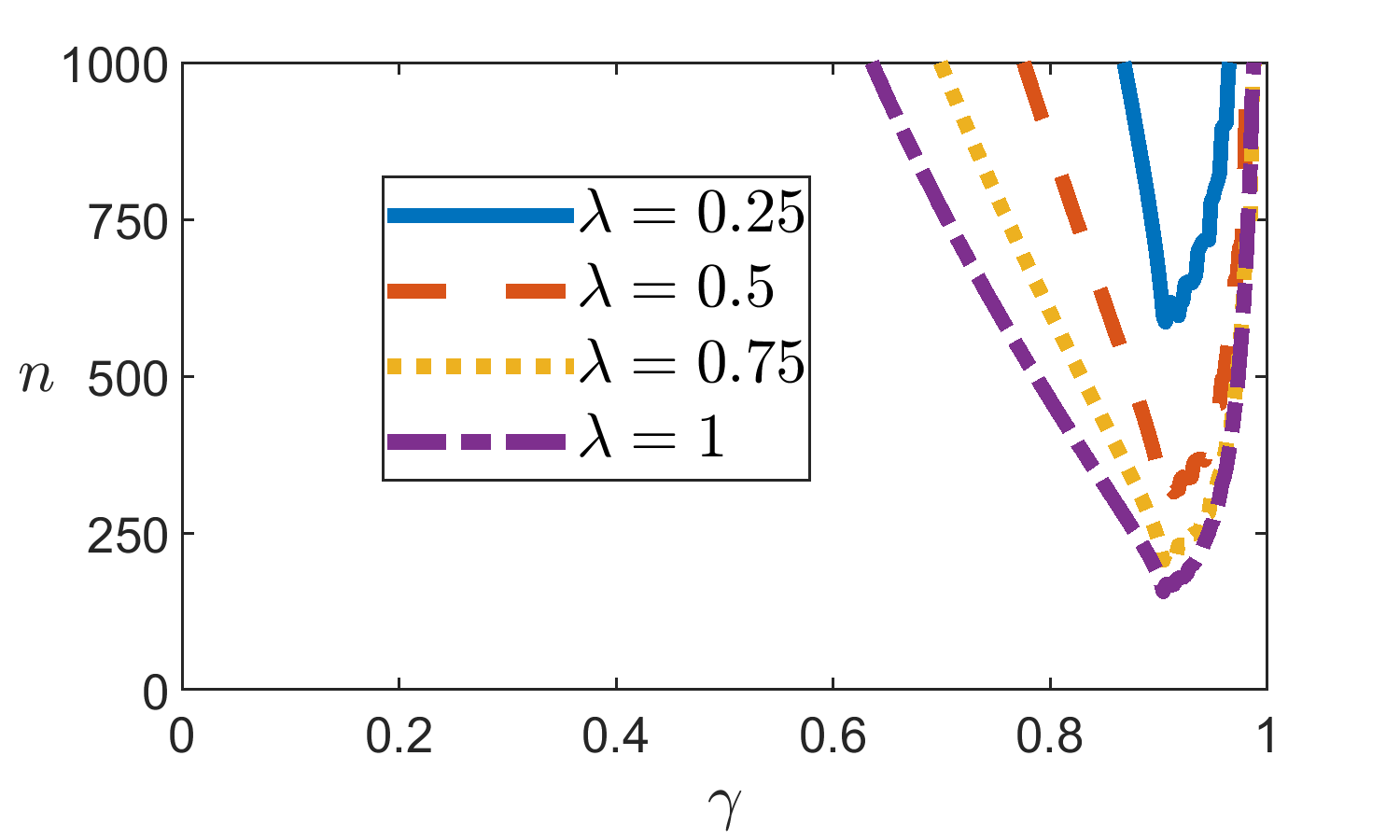}
    \includegraphics[width=7.5cm]{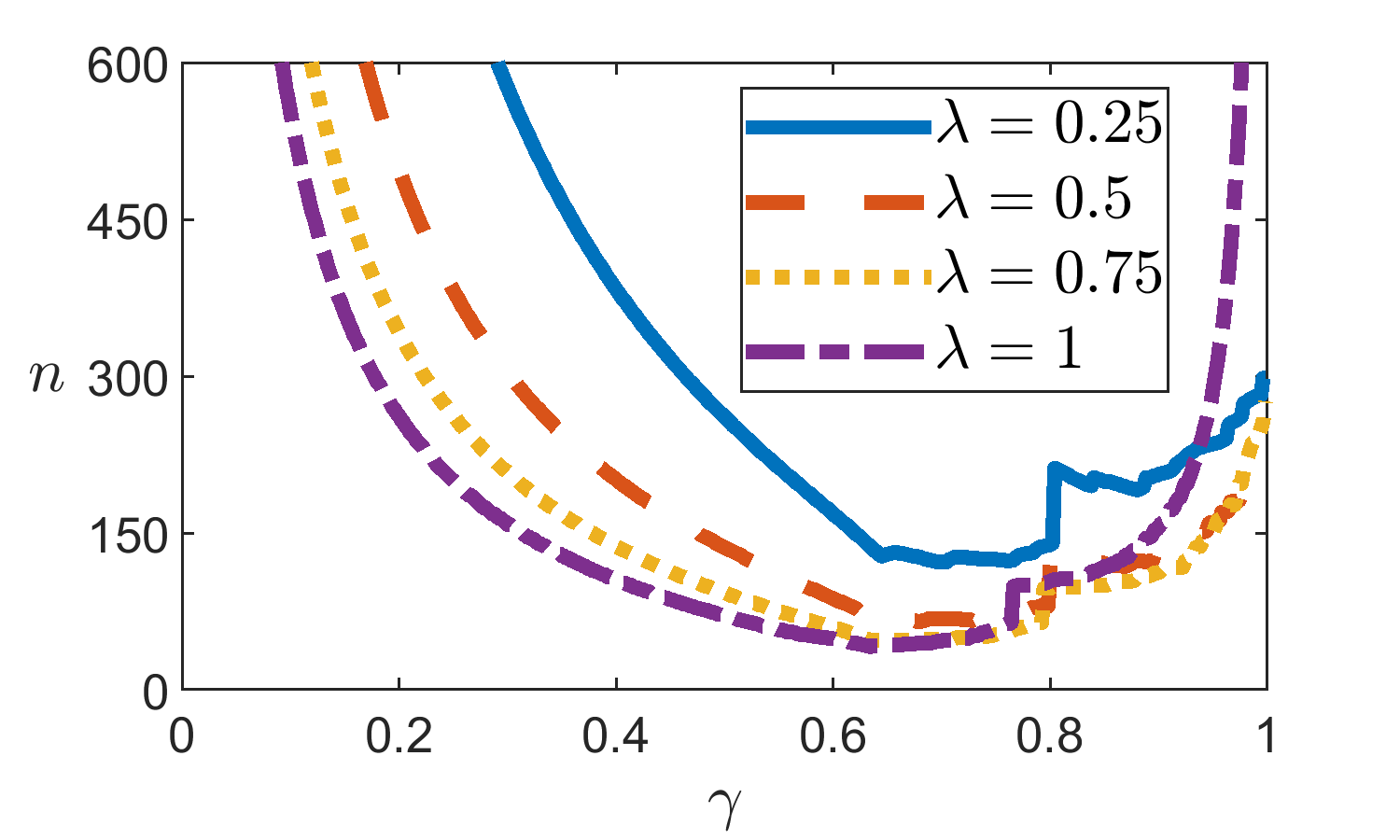}
    \caption{\cref{example:orth}.}
\end{subfigure}
\begin{subfigure}{.5\textwidth}
    \centering
    \includegraphics[width=7.5cm]{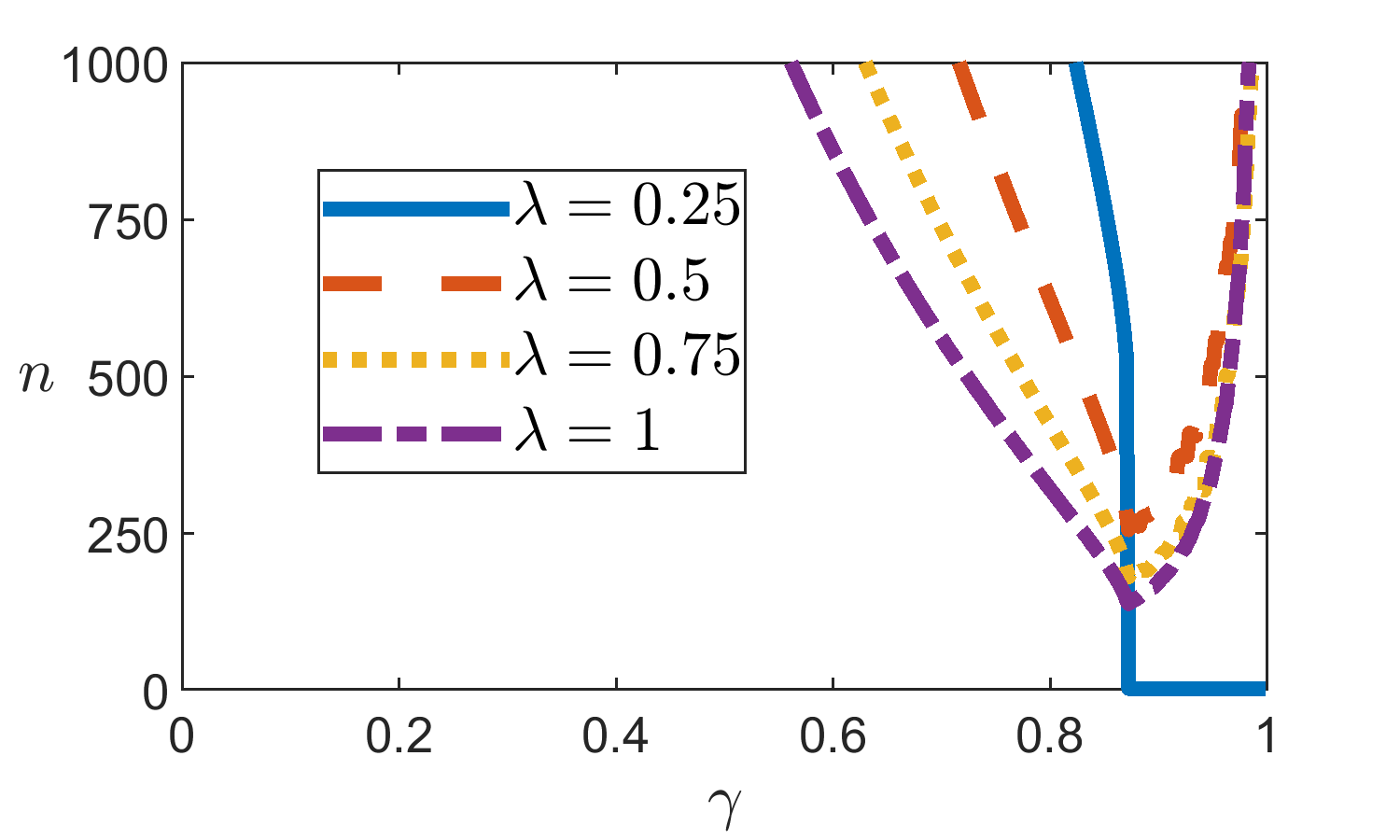}
    \includegraphics[width=7.5cm]{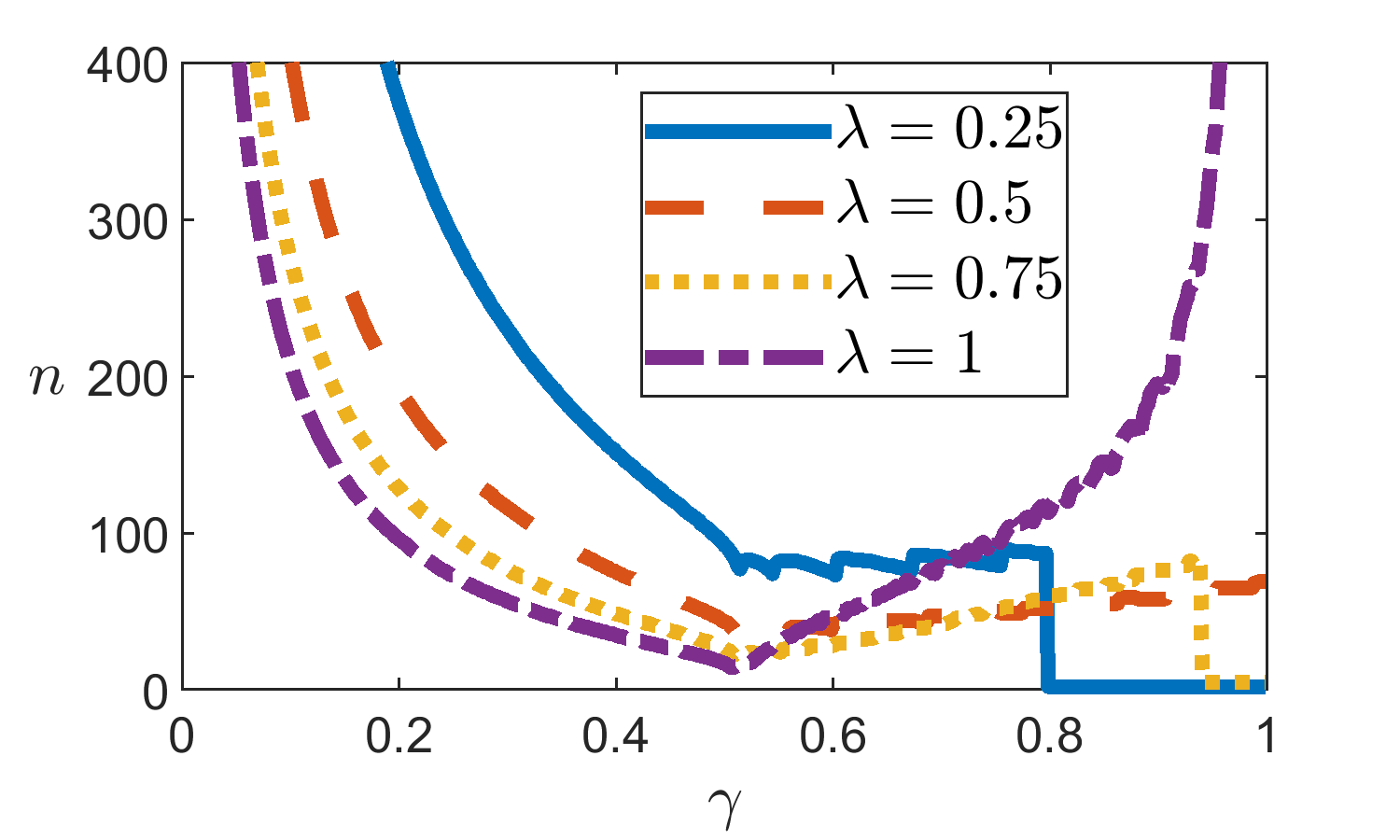}
    \caption{\cref{example:box}.}
\end{subfigure}
\caption{Parameter curves with $m=50,\,d=1000$ (top) and $m=500,\,d=1000$ (bottom).}
\label{fig:plots}
\end{figure}

From observing these figures we can make some comments on the choice of $\gamma$. One can see in all plots from \Cref{fig:plots} and the values of $\gamma$ used in \cref{tab:orth}--\cref{tab:box} that the optimal $\gamma$ has little variation as $\lambda$ changes compared to the variation between different problem instances. We have also generated plots for the same problem instances $(m,d)$ in \cref{tab:orth}--\cref{tab:box} but with different (randomly generated) matrices $L,b$ and have observed that for the majority the optimal $\gamma$ remains more or less unchanged for the same problem instance $(m,d)$. Thus it appears that the optimal value of $\gamma$ depends largely on the problem instance $(m,d)$ rather than the value of $\lambda$. Across all the problem instances we observe that the optimal values of $\gamma$ range from $0.5$ to $0.9$. In order to determine what specific value of $\gamma$ is optimal for a problem instance we need to make further comparisons which are carried out below.

When comparing the error in $\overline{x}$ and $\gap$ for different values of $\gamma$ we see that in general the values of $\gamma$ that take the smallest number of iterations also produce the smallest errors. This gives further merit to identifying the optimal value of $\gamma$ for each problem. In general, we observe that the more difficult a problem is to solve (i.e., the larger the number of iterations is), the larger the optimal value of $\gamma$ becomes. From \cref{tab:orth}--\cref{tab:box} we see in each example the case where $(m,\,d) = (65,\,70)$ takes the smallest number of iterations and has the smallest values of $\gamma$ while $(m,\,d) = (50,\,1000)$ takes the largest number of iterations and has the largest values of $\gamma$. Problem instances where $d$ is large and/or where $m\ll d$ seem to be more difficult for the algorithms to solve.

\begin{table}[t!]
    \centering
    \begin{tabular}{lllllllll}
        $m$ & $d$ & $|\phi(\overline{x})-\phi(\overline{x}^*)|$ & $\|\gap-\gap^*\|_{\infty}$ & $\|\gap-\gap^*\|_{\infty}$ & $\|\overline{x}-\overline{x}^*\|_{\infty}$ &  $\lambda$ & $\gamma$ & $n$ \\
        & & & (\cref{fact:pazybr}\ref{eqn:v1}) & (\cref{fact:pazybr}\ref{eqn:v2}) & & & & \\ \hline
        \multirow{6}{*}{$10$} & \multirow{6}{*}{$100$} & $1.22\times10^{-9}$ & $1.73\times10^{-3}$ & $2.48\times10^{-8}$ & $1.38\times10^{-7}$ & $0.25$ & $0.75$ & $204$ \\
        & & $6.65\times10^{-10}$ & $1.62\times10^{-3}$ & $1.06\times10^{-8}$ & $4.35\times10^{-8}$ & $0.5$ & $0.75$ & $109$ \\
        & & $1.03\times10^{-10}$ & $1.57\times10^{-3}$ & $4.82\times10^{-9}$ & $1.27\times10^{-8}$ & $0.75$ & $0.75$ & $75$ \\
        & & $2.61\times10^{-10}$ & $1.55\times10^{-3}$ & $2.18\times10^{-8}$ & $8.73\times10^{-9}$ & $0.9$ & $0.75$ & $63$ \\
        & & $5.17\times10^{-10}$ & $1.54\times10^{-3}$ & $-$ & $1.01\times10^{-8}$ & $1$ & $0.75$ & $57$ \\
        & & $2.82\times10^{-9}$ & $-$ & $1.74\times10^{-8}$ & $2.73\times10^{-7}$ & \multicolumn{2}{c}{Dykstra} & $338$ \\ \hline
        \multirow{6}{*}{$65$} & \multirow{6}{*}{$70$} & $8.77\times10^{-8}$ & $2.66\times10^{-2}$ & $1.54\times10^{-7}$ & $2.25\times10^{-7}$ & $0.25$ & $0.7$ & $44$ \\
        & & $4.94\times10^{-9}$ & $2.98\times10^{-2}$ & $6.95\times10^{-9}$ & $6.53\times10^{-9}$ & $0.5$ & $0.55$ & $25$ \\
        & & $3.09\times10^{-9}$ & $3.64\times10^{-2}$ & $2.46\times10^{-9}$ & $2.46\times10^{-9}$ & $0.75$ & $0.5$ & $15$ \\
        & & $8.03\times10^{-10}$ & $2.59\times10^{-2}$ & $1.94\times10^{-9}$ & $1.53\times10^{-9}$ & $0.9$ & $0.55$ & $16$ \\
        & & $8.96\times10^{-10}$ & $1.62\times10^{-2}$ & $-$ & $1.95\times10^{-9}$ & $1$ & $0.55$ & $23$ \\
        & & $1.35\times10^{-9}$ & $-$ & $2.20\times10^{-9}$ & $4.15\times10^{-9}$ & \multicolumn{2}{c}{Dykstra} & $22$ \\ \hline
        \multirow{6}{*}{$50$} & \multirow{6}{*}{$1000$} & $2.81\times10^{-8}$ & $3.05\times10^{-4}$ & $2.15\times10^{-8}$ & $4.24\times10^{-7}$ & $0.25$ & $0.9$ & $662$ \\
        & & $1.25\times10^{-8}$ & $2.93\times10^{-4}$ & $1.04\times10^{-8}$ & $2.07\times10^{-7}$ & $0.5$ & $0.9$ & $345$ \\
        & & $7.31\times10^{-9}$ & $2.85\times10^{-4}$ & $6.49\times10^{-9}$ & $1.28\times10^{-7}$ & $0.75$ & $0.9$ & $236$ \\
        & & $5.76\times10^{-9}$ & $2.82\times10^{-4}$ & $5.30\times10^{-9}$ & $1.02\times10^{-7}$ & $0.9$ & $0.9$ & $199$ \\
        & & $4.94\times10^{-9}$ & $2.79\times10^{-4}$ & $-$ & $8.64\times10^{-8}$ & $1$ & $0.9$ & $181$ \\
        & & $2.89\times10^{-7}$ & $-$ & $3.96\times10^{-8}$ & $3.28\times10^{-6}$ & \multicolumn{2}{c}{Dykstra} & $3078$ \\ \hline
        \multirow{6}{*}{$500$} & \multirow{6}{*}{$1000$} & $3.74\times10^{-8}$ & $2.48\times10^{-3}$ & $2.37\times10^{-8}$ & $9.42\times10^{-8}$ & $0.25$ & $0.7$ & $122$ \\
        & & $3.56\times10^{-9}$ & $2.22\times10^{-3}$ & $9.70\times10^{-9}$ & $2.13\times10^{-8}$ & $0.5$ & $0.7$ & $68$ \\
        & & $3.08\times10^{-9}$ & $2.31\times10^{-3}$ & $6.73\times10^{-9}$ & $1.73\times10^{-8}$ & $0.75$ & $0.65$ & $47$ \\
        & & $1.25\times10^{-9}$ & $2.20\times10^{-3}$ & $1.50\times10^{-6}$ & $1.16\times10^{-8}$ & $0.9$ & $0.65$ & $41$ \\
        & & $1.56\times10^{-10}$ & $1.89\times10^{-3}$ & $-$ & $2.70\times10^{-9}$ & $1$ & $0.65$ & $43$ \\
        & & $3.72\times10^{-8}$ & $-$ & $1.37\times10^{-8}$ & $8.44\times10^{-8}$ & \multicolumn{2}{c}{Dykstra} & $135$ \\
        \end{tabular}
    \caption{\cref{example:orth} with $z=\mathbf{0}$, $\epsilon=10^{-8}$, $x\in\mathbb{R}^d_+$. For Dykstra, \eqref{eqn:v_Dyk} is used to calculate $\gap$.}
    \label{tab:orth}
\end{table}

From \cref{tab:orth}--\cref{tab:box} the relaxed DR algorithm with $\lambda=0.75$ appears to provide the best performance in general. We see a trend that as $\lambda$ increases the number of iterations taken decreases. Though PR ($\lambda=1$) needed the smallest number of iterations to reach a solution of the desired tolerance we cannot apply \cref{fact:pazybr}\cref{eqn:v2} to calculate $\gap$ and instead must rely on \cref{fact:pazybr}\cref{eqn:v1}. From \cref{tab:orth}--\cref{tab:box} we observe that the errors in $\gap$ when using \cref{fact:pazybr}\cref{eqn:v2} are almost always significantly smaller than those using \cref{fact:pazybr}\cref{eqn:v1}. The limit expression in \cref{fact:pazybr}\cref{eqn:v1} contains $n$ thus to more accurately compute $\gap$ we must allow the algorithm to run for more iterations which defeats the performance benefit gained by choosing this method. Thus one could conclude that to find a solution quickly and accurately the relaxed DR algorithm is preferred over PR. When $\lambda=0.9$ \cref{fact:pazybr}\cref{eqn:v2} is applicable but we see from \cref{tab:orth}--\cref{tab:box} that the errors in $\gap$ are often much larger than those from $\lambda\in\{0.25,\, 0.5,\, 0.75\}$ or from Dykstra.

\begin{table}[t!]
    \centering
    \begin{tabular}{lllllllll}
        $m$ & $d$ & $|\phi(\overline{x})-\phi(\overline{x}^*)|$ & $\|\gap-\gap^*\|_{\infty}$ & $\|\gap-\gap^*\|_{\infty}$ & $\|\overline{x}-\overline{x}^*\|_{\infty}$ &  $\lambda$ & $\gamma$ & $n$ \\
        & & & (\cref{fact:pazybr}\ref{eqn:v1}) & (\cref{fact:pazybr}\ref{eqn:v2}) & & & & \\ \hline
        \multirow{6}{*}{$10$} & \multirow{6}{*}{$100$} & $9.44\times10^{-8}$ & $1.04\times10^{-1}$ & $2.62\times10^{-8}$ & $5.16\times10^{-8}$ & $0.25$ & $0.65$ & $151$ \\
        & & $1.32\times10^{-8}$ & $1.01\times10^{-1}$ & $8.03\times10^{-9}$ & $7.22\times10^{-9}$ & $0.5$ & $0.65$ & $73$ \\
        & & $3.19\times10^{-8}$ & $1.13\times10^{-1}$ & $1.49\times10^{-9}$ & $1.74\times10^{-8}$ & $0.75$ & $0.65$ & $46$ \\
        & & $6.80\times10^{-8}$ & $1.21\times10^{-1}$ & $5.99\times10^{-4}$ & $3.71\times10^{-8}$ & $0.9$ & $0.65$ & $36$ \\
        & & $9.45\times10^{-8}$ & $1.26\times10^{-1}$ & $-$ & $5.16\times10^{-8}$ & $1$ & $0.65$ & $31$ \\
        & & $1.91\times10^{-7}$ & $-$ & $8.70\times10^{-9}$ & $1.04\times10^{-7}$ & \multicolumn{2}{c}{Dykstra} & $183$ \\ \hline
        \multirow{6}{*}{$65$} & \multirow{6}{*}{$70$} & $5.68\times10^{-14}$ & $4.70\times10^{-1}$ & $3.14\times10^{-1}$ & $0$ & $0.25$ & $0.6$ & $2$ \\
        & & $5.68\times10^{-14}$ & $3.14\times10^{-1}$ & $3.42\times10^{-14}$ & $0$ & $0.5$ & $0.6$ & $2$ \\
        & & $5.68\times10^{-14}$ & $1.57\times10^{-1}$ & $3.14\times10^{-1}$ & $0$ & $0.75$ & $0.6$ & $2$ \\
        & & $5.68\times10^{-14}$ & $6.27\times10^{-2}$ & $5.02\times10^{-1}$ & $0$ & $0.9$ & $0.6$ & $2$ \\
        & & $5.68\times10^{-14}$ & $2.86\times10^{-14}$ & $-$ & $0$ & $1$ & $0.6$ & $2$ \\
        & & $5.68\times10^{-14}$ & $-$ & $1.09\times10^{-13}$ & $0$ & \multicolumn{2}{c}{Dykstra} & $3$ \\ \hline
        \multirow{6}{*}{$50$} & \multirow{6}{*}{$1000$} & $2.14\times10^{-6}$ & $2.77\times10^{-2}$ & $2.29\times10^{-8}$ & $4.76\times10^{-7}$ & $0.25$ & $0.85$ & $772$ \\
        & & $2.71\times10^{-7}$ & $3.50\times10^{-2}$ & $1.00\times10^{-8}$ & $1.04\times10^{-7}$ & $0.5$ & $0.9$ & $305$ \\
        & & $1.91\times10^{-7}$ & $3.48\times10^{-2}$ & $4.90\times10^{-9}$ & $7.42\times10^{-8}$ & $0.75$ & $0.9$ & $204$ \\
        & & $9.86\times10^{-8}$ & $3.24\times10^{-2}$ & $4.41\times10^{-9}$ & $2.95\times10^{-8}$ & $0.9$ & $0.9$ & $182$ \\
        & & $7.38\times10^{-8}$ & $3.23\times10^{-2}$ & $-$ & $3.58\times10^{-8}$ & $1$ & $0.9$ & $172$ \\
        & & $7.82\times10^{-6}$ & $-$ & $3.17\times10^{-8}$ & $1.74\times10^{-6}$ & \multicolumn{2}{c}{Dykstra} & $2403$ \\ \hline
        \multirow{6}{*}{$500$} & \multirow{6}{*}{$1000$} & $2.75\times10^{-6}$ & $8.29\times10^{-2}$ & $3.85\times10^{-8}$ & $1.42\times10^{-7}$ & $0.25$ & $0.6$ & $74$ \\
        & & $1.19\times10^{-7}$ & $1.09\times10^{-1}$ & $2.99\times10^{-9}$ & $7.30\times10^{-9}$ & $0.5$ & $0.55$ & $38$ \\
        & & $8.01\times10^{-8}$ & $1.50\times10^{-1}$ & $1.17\times10^{-7}$ & $5.16\times10^{-9}$ & $0.75$ & $0.5$ & $26$ \\
        & & $3.62\times10^{-8}$ & $1.76\times10^{-1}$ & $2.96\times10^{-2}$ & $2.39\times10^{-9}$ & $0.9$ & $0.5$ & $20$ \\
        & & $1.38\times10^{-8}$ & $2.46\times10^{-1}$ & $-$ & $9.27\times10^{-10}$ & $1$ & $0.5$ & $17$ \\
        & & $1.87\times10^{-7}$ & $-$ & $4.63\times10^{-9}$ & $1.17\times10^{-8}$ & \multicolumn{2}{c}{Dykstra} & $42$ \\
    \end{tabular}
    \caption{\cref{example:box} with $z=5\times\mathbf{1}$, $\epsilon=10^{-8}$, $2\leq x_i\leq10$,  $i\in \{1,\dots,d\}$. For Dykstra, \eqref{eqn:v_Dyk} is used to calculate $\gap$.}
    \label{tab:box}
\end{table}

To summarize our observations from the numerical experiments:
\begin{itemize}
    \item When considering our measures of optimal performance, i.e. a small number of iterations and an accurate approximation of $\overline{x}$ and $\gap$, the relaxed DR algorithm with $\lambda=0.75$ seems to be the best choice.
    \item When $d$ is large and/or where $m\ll d$ an optimal $\gamma$ is likely to be $\gamma\in[0.8,1[$, else $\gamma\in[0.5,0.8]$ may be an optimal choice.
\end{itemize}

\section{Conclusion and Future Work}
\label{sec:conc}
We have proved the strong convergence of the shadow sequences of the Douglas--Rachford algorithm and Peaceman--Rachford algorithm in the inconsistent case. We have conducted numerical experiments comparing the performance of the relaxed Douglas--Rachford algorithm and Peaceman--Rachford algorithm on inconsistent finite-dimensional optimization problems of various sizes.

In the future it will be interesting to see the performance of these algorithms on inconsistent infinite-dimensional optimization problems such as the optimal control problems studied in \cite{BurKayMou2024}:
\[
\mbox{(MEC)\ }\left\{\begin{array}{rl}
\ds\min_{u\in L^2([0,1],\RR^m)} &\ \ds\int_0^1\|u(t)\|_2^2\,dt  \\[4mm]
s.t. &\ \dot{x}(t) = L(t)\,x(t) + B(t)\,u(t)\,,\ \ x(0) = x_0,\ \ x(1) = x_f\,, \\[2mm]
    &\ \underline{u}_i \le u_i(t) \le \overline{u}_i\,,\ \ \mbox{for all}\  t\in[0,1]\,,\ \ i = 1,\ldots,m\,,
\end{array} \right.
\]
where $x$ and $u$ are the {\em state} and {\em control variables}, respectively.
Problem~(MEC) is referred to as the minimum-energy control problem which is a special case of linear--quadratic optimal control problems.  This problem is more general and computationally more challenging than the finite-dimensional linear--quadratic problems we have made experiments with in this paper.  

We note that the consistent case of Problem~(MEC) has been extensively studied in~\cite{BauBurKay2019, BurCalKay2024a, BurCalKayMou2024}, including its extension to an {\em LQ control problem} subject to both control and state box constraints~\cite{BurCalKay2024b}, all for $\lambda = 1/2$. One must also note that \cref{thm:DR:convergence:fg}--\cref{thm:PR:convergence:fg} hold also for the consistent case.  So it will be interesting to solve the optimal control problems in \cite{BurCalKay2024a, BurCalKayMou2024,BurCalKay2024b}, for various $\lambda\in]0,1]$ and make comparisons, in the consistent case.

\section*{Acknowledgements}
The research of WMM is partially supported by 
the Natural Sciences and Engineering Research Council of
Canada Discovery Grant (NSERC DG).
The research of BIC was supported by an Australian Government Research Training Program Scholarship.
The research of MS is partially supported by WMM's NSERC DG.

\end{document}